%% LyX 2.1.3 created this file.  For more info, see http://www.lyx.org/.
%% Do not edit unless you really know what you are doing.
\documentclass[11pt,english]{amsart}
\usepackage[T1]{fontenc}
\usepackage[utf8]{luainputenc}
\usepackage{xcolor}
\usepackage{pdfcolmk}
\usepackage{amsthm}
\usepackage{amsmath}
\usepackage{stmaryrd}
\usepackage{stackrel}
\usepackage{setspace}
\PassOptionsToPackage{normalem}{ulem}
\usepackage{ulem}

\makeatletter

%%%%%%%%%%%%%%%%%%%%%%%%%%%%%% LyX specific LaTeX commands.
\providecolor{lyxadded}{rgb}{0,0,1}
\providecolor{lyxdeleted}{rgb}{1,0,0}
%% Change tracking with ulem

%%%%%%%%%%%%%%%%%%%%%%%%%%%%%% Textclass specific LaTeX commands.
\numberwithin{figure}{section}
\numberwithin{equation}{section}
\numberwithin{table}{section}

%%%%%%%%%%%%%%%%%%%%%%%%%%%%%% User specified LaTeX commands.

  \input amssymb.sty
\usepackage{etoolbox}
\patchcmd{\thebibliography}{\section*}{\section}{}{}

\usepackage{algpseudocode} \usepackage{algorithm}
\usepackage{algpseudocode,algorithm}
\usepackage{lipsum}
\usepackage{caption}
\usepackage{graphics}

\usepackage{amsmath}
\usepackage{amssymb}
\usepackage[all]{xy}

\usepackage{epsfig}\usepackage{youngtab}%\newcommand{\be}{\begin{equation}}

\newcommand{\ef}{\end{equation}}
%    Some definitions useful in producing this sort of documentation:
\chardef\bslash=`\\ % p. 424, TeXbook
%    Normalized (nonbold, nonitalic) tt font, to avoid font
%    substitution warning messages if tt is used inside section
%    headings and other places where odd font combinations might
%    result.

%    command name

%    LaTeX package name

%    File name

%    environment name

\hfuzz1pc % Don't bother to report overfull boxes if overage is < 1pc

%       Theorem environments
\newdir{:=}{{}}
\newcommand*\colvec[3][]{
    \begin{pmatrix}\ifx\relax#1\relax\else#1\\\fi#2\\#3\end{pmatrix}
}
\newtheorem{thm}{Theorem}[section]
\newtheorem*{thm*}{Theorem}

\newtheorem{lem}{Lemma}[section]
\newtheorem*{lem*}{Lemma}
\newtheorem{corl}{Corollary}[lem]
\newtheorem*{corl*}{Corollary}

\newtheorem{prop*}{Proposition}

\theoremstyle{definition}
\newtheorem{defn}{Definition}[section]
\newtheorem{examp}{Example}
\newtheorem*{examp*}{Example}
\newtheorem*{remark*}{Remark}
\newtheorem*{CC*}{Crossover Conjecture}
\newtheorem*{Note*}{Note}
\newtheorem*{defn*}{Definition}

 \theoremstyle{remark}
\newtheorem{remark}{Remark}[section]

%\newcommand{\bysame}{\mbox{\rule{3em}{.4pt}}\,}

%       Math definitions

 \renewcommand{\sectionmark}[1]{}

\newcommand{\la}{\langle}
\newcommand{\ra}{\rangle}

\newcommand{\defect}{\operatorname{def}}

\renewcommand{\a}{\alpha}

\makeatother

\usepackage{babel}
\begin{document}

\title[EXTERNAL LITTELMANN PATHS OF TYPE $A$]{EXTERNAL LITTELMANN PATHS FOR CRYSTALS  OF TYPE $A$}
\author{Ola Amara-Omari$^1$ and  Mary Schaps}
\address { Bar-Ilan University, Ramat-Gan, Israel,
mschaps@macs.biu.ac.il}

\address{ Bar-Ilan University, Ramat-Gan, Israel,
 olaomari77@hotmail.com}
 \subjclass[2010] {17B10, 17B35}
\thanks {$^1$Partially supported by Ministry of Science, Technology and Space fellowship , at Bar-Ilan University.}

\maketitle

 \begin{abstract} 
For the Kashiwara crystal of a highest weight representation of an affine Lie algebra of  type $A$ and rank $e$, with highest weight $\Lambda$, there is a labeling by multipartitions and by piecewise linear paths in the real weight space called Littelmann paths.  Both labelings are constructed recursively, but since Kashiwara demonstrated that the crystals are isomorphic, there is a bijection between the labels.  

We choose a multicharge $(k_1,\dots,k_r)$, with $0 \leq k_1\leq  k_2....\leq k_r \leq e-1$. We put $k_i$ in the node at the upper left corner of partition $i$ of the multipartition and let the residues from $\mathbb Z/ e \mathbb Z$ increase across rows and decrease down columns.    For $e=2$, we call a multipartition residue-homogeneous if all nonzero rows end in nodes of the same residue and partitions with the same corner residue have first rows of the same parity. It is strongly residue homogeneous if each partition ends in a triangle of whose side has length one less than the first row of the next partition.

In this paper we show that each such multipartition corresponds to a Littelmann path which is unidirectional in the sense that the projection of the the main part of the path to the coordinates of the fundamental weights consists of long paths all lying in either the second or fourth quadrant, separated by oscillating paths with a fixed integer oscillator. The path corresponding to such a multipartition can be constructed non-recursively using only integers describing the structure of the multipartition.
\end{abstract}

\section{INTRODUCTION}

Let $e$ be the rank of an affine Lie algebra  $\mathfrak g$ of type A and let $r$ be the level of a highest weight module with highest weight $\Lambda$. For type A, the level $r$  is the sum of the coefficients of the fundamental weights in the dominant integral weight  $\Lambda$. There is a labelling of the elements of the basis of the highest weight module by multipartitions and those multipartitions which are recursively constructed as labels of the basis elements are called $e$-regular.  The level $r$ is the common number of partitions in these multipartitions.

 The problem of determining the $e$-regular multipartions of type $A$ in a non-recursive manner has shown only slow progress. It is known for $r=1$.  Mathas settled it for $e=2$ in \cite{M}, and Ariki, Kreiman and Tsuchioka for $r=2$ in \cite{AKT}.  A few other results extending these are also available.  We are trying to attack the problem through Littelmann paths, by trying to find a direct way to translate between the multipartition labelling  a crystal basis element and the corresponding Littelmann path. The results in this paper are for the $e=2$ case, for which, as we mentioned, Mathas already solved the problem of finding a criterion for $e$-regular multipartitions, but we hope that finding a way to translate from Littelmann paths to multipartitions will be relevant to the problem of $e$-regularity, since only $e$-regular multipartitions will correspond to a Littelmann path.

In \S2 and \S3, we give needed background, and the definition of a unidirectional Littelmann path.  In \S 4, we define a strongly residue homogeneous multipartition and prove our main result, that a strongly residue-homogeneous multipartition corresponds to a unidirectional Littelman path whose invariants can be determined from the structure of the multipartition. In \S5, we give examples to illustrate the limitations of the theorem and explain what remains open.

\section{DEFINITIONS AND NOTATION}

Let $\mathfrak g$ be the affine Lie algebra $A^{(1)}_{e-1}$. Let $C$ be the Cartan matrix, and $\delta$ the null root. Let $\Lambda$ be a dominant integral weight, let $V(\Lambda)$ be the highest weight module with that 
highest weight, and let $P(\Lambda)$ be the set of weights of $V(\Lambda)$. Let $I$ be the set of residues $\mathbb Z / e \mathbb Z$ and let  $Q$ be the $\mathbb Z$-lattice generated by the simple roots,
\[
\a_0,\dots,\a_{e-1}.
\]
Let  $Q_+$ be the subset of $Q$ in which all coefficients are non-negative. 

The weight space $P$ of the affine Lie algebra has two different bases.  One is given by the fundamental weights together with the null root, $\Lambda_0,\dots, \Lambda_{e-1}, \delta$, 
and one is given by $\Lambda_0, \a_0,\dots,\a_{e-1}$.  We will usually use the first basis for our weights.  

A highest weight module $V(\Lambda)$ is integrable.  Every weight $\lambda$ has the form $\Lambda-\alpha$, for $\alpha \in Q_+$.  The vector
of nonnegative integers giving the coefficients of $\alpha$ is called the content of $\lambda$. We will not repeat all the standard material about the symmetric form $(- \mid -)$, which can be found in \cite{Ka}. We follow \cite{Kl} in defining the defect of a weight by

\[
\defect(\lambda)=\frac{1}{2}((\Lambda \mid \Lambda)-  (\lambda \mid \lambda)).
\]

 Since we are in a highest weight module, we always have $(\Lambda \mid \Lambda) \geq (\lambda \mid \lambda)$, and the defect is in fact an integer for the affine Lie algebras of type $A$ treated in this paper. The weights of defect $0$ are those lying in the Weyl group orbit of $\Lambda$ and will play an important role in the definition of the Littelmann paths. 
Define
\[
\max P(\Lambda)=\{\lambda \in P(\Lambda) \mid \lambda + \delta \not\in P(\Lambda)\},
\] 
and by \cite{Ka}, every element of 
$P(\Lambda)$ is of the form $\{y-k\delta \mid y \in \max P(\Lambda), k \in \mathbb Z_{ \geq 0}\}$.
Let $W$ denote the Weyl group, generated by reflections $s_0,\dots, s_{e-1}$.

By the ground-breaking work of Chuang and Rouquier \cite{CR}, the highest weight module $V(\Lambda)$ can be categorified.  The weight spaces lift to categories of representations of  blocks of cyclotomic Hecke algebras, the basis vectors lift to simple modules,  the Chevalley generators $e_i,f_i$ lift to restriction and induction functors $E_i,F_i$, and the simple reflections in the Weyl group lift to derived equivalences, which in a few important cases are actually Morita equivalences.  We will not be using the categorified version, but it provides the underlying motivation for trying to understand the multipartitions, which label the simple modules of the cyclotomic Hecke algebras. For $r=1$, in what is called the degenerate case, these are the group algebras of symmetric groups.

There are three distinct labellings for an element of a Kashiwara crystal $B(\Lambda)$, \cite{K1},\cite{K2}, of type $A$.  We will not directly need the definition of the crystal or of its crystal base. An element of the crystal base can be labelled  by its Littelmann path,  by its multipartitions, and by its canonical basis.  In this paper we are concerned only with the first two, looking for cases where there is a direct connection between them, so that we can read off the multipartition from the Littelmann path or the Littelmann path from the  multipartition.  In a different paper, \cite{OS}, we make a similar direct passage from a multipartition to a canonical basis element for a symmetric crystal.

The theory originated with work of Lakshmibai and Seshadri for type A, but it was Littelmann in \cite{L} who extended to other types and  who proved many of the most important properties of the path model of the highest weight representations. 
A path $\pi$ is a continuous, piecewise linear function from the closed real interval $[0,1]$ into the real space $\mathbb R \otimes_\mathbb Z P(\Lambda)$ such that $\pi(0)=\{0\}$ and $\pi(1) \in P(\Lambda)$.  The weight $\pi(1)$   will be called the \textit{weight of the path}.

 For any residue $\epsilon$ in the set of residues $I$, we define a function $H_\epsilon^\pi(t)=\la \pi(t),h_\epsilon \ra$, which is simply the projection of the path onto the coefficient of $ \Lambda_\epsilon$.  We then set
\[m_\epsilon=\min_t (H^\pi_\epsilon(t)).
\]

\noindent This minimum is always achieved at one of the finite set of corner weights and is always non-positive, since  $H_\epsilon^\pi(0)=0$.  We let $\mathcal P_{int}$ be the set of paths for which this 
$m_\epsilon$ is an integer for all $\epsilon \in I$. Littelmann proves in \cite{L}, Lemma 4.5(d), that all the Littelmann paths in the crystal for a dominant weight $\Lambda$ lie in $P_{int}$. Note that since $\pi(1) \in P(\Lambda)$, all the numbers $H_\epsilon^\pi(1)$ are integers. 

\begin{defn} Littelmann's function $ f_\epsilon$ is given on $\mathcal P_{int}$ as follows:
\begin{itemize}
\item If $H^\pi_\epsilon(1)=m_\epsilon$, then $f_\epsilon(\pi)=0$.
\item Set
\begin{align*}
t_0=&\max_t \{ t \in [0,1] \mid H^\pi_\epsilon (t)=m_\epsilon \}\\
t_1=&\min_t \{ t \in [t_0,1] \mid H^\pi_\epsilon (t)=m_\epsilon+1  \}
\end{align*}
then if $H^\pi_\epsilon(t)$ is monotonically increasing on the interval $[t_0,t_1]$, we define
\[
f_\epsilon(\pi)(t)=
\begin{cases}
\pi(t) &t \in [0,t_0]\\     \pi(t_0)+s_\epsilon(\pi(t)-\pi(t_0)) &t \in [t_0,t_1]\\\pi(t)-\alpha_\epsilon &t \in [t_1,1]
\end{cases}
\]
In the more complicated case, where the path  $H^\pi_\epsilon(t)$ is not monotonic  from $t_0$ to $t_1$, but contains some segments which oscillate, can be found in \cite{L}. We will have oscillating sections of the path, but they will all occur before $t_0$ or after $t_1$.
\end{itemize}
\end{defn}
\noindent The definition of $e_\epsilon$ is dual and reverses the action of $f_\epsilon$. The definition is given in \cite{L}. We will be using primarily $f_\epsilon$.

 Let $\mathcal P$ denote the set of paths. For $\nu,\mu$ defect $0$ weights, we define $\nu \geq \mu$ if there is a sequence of defect $0$ weights 
$\nu=\nu_0,...,\nu_s=\mu$ and positive real roots $\beta_1,\dots,\beta_s$ such that  
\[
\nu_i =s_{\beta_i}(v_{i-1})
\]
and
 \[\la \nu_{i-1},\beta_i^\vee \ra <0, i=1,2,\dots,s.
\]

Following a suggestion of Kashiwara, Littelmann then defines a distance function between $\nu$ and $\mu$.  We do not need the entire function and its properties, only the case $dist(\mu,\nu)=1$, which means that no other weight can be inserted between them preserving the order.

\begin{defn} \cite{L} An $a$-chain for $(\mu,\nu)$ is a sequence $\mu=\lambda_0>\lambda_1>\dots>\lambda_s=\nu$ of defect $0$ weights such that either $s=0$ and $\mu=\nu$ or $\lambda_i=s_{\beta_{i}}(\lambda_{i-1})$ for some positive real roots  $\beta_1,\dots,\beta_s$  with dist$(\lambda_i,\lambda_{i-1})=1$ and 
$a\la \lambda_i ,h_{\beta_i} \ra \in \mathbb Z$
\end{defn}

\begin{defn} \cite{L} A  \textit{rational path}  of class $\Lambda$ is an ordered set of sequences $(\underline \nu; \underline{h})$   for which  following hold:
\begin{itemize}
\item  $\nu_1>\dots>\nu_s $ is a linearly ordered set of defect $0$ weights in $W\Lambda$. 
\item  $0<b_1<b_2<\dots<b_s=1$ are rational numbers.  We will denote $0$ by $b_0$.
\end{itemize}
We identify $\pi$ with the parameterized path 
\[\pi(t)=\sum^{j-1}_{\ell=1} (b_\ell-b_{\ell -1})\nu_\ell +(t-b_{j-1})\nu_j,  b_{j-1} \leq t \leq b_{j}
\]
\end{defn}
\begin{defn}  A rational path $(\underline \nu, \underline b)$ is called an LS-path if there is an $b_i$-chain connecting $\nu_i$ to $\nu_{i+1}$.
We will generally write the rational path in the form 

\[
(\nu_1,\dots, \nu_s ;b_1,b_2,\dots,b_s=1).
\]
\noindent and call it  an LS-representation of the path. In this paper, the numbers $b_i$ will be endpoints of subintervals of $[0,1]$ parameterizing straight subpaths and will be called parameter endpoints.
\end{defn} In Example \ref{gaps}, we give an example of a rational path with non-trivial $b_i$-chains.  In \cite{AKT} the authors refer to $\nu_1$ as the ceiling and $\nu_s$ as the  floor.  Since there is a unique multipartition for a defect $0$ weight, they sometimes use the same notation for the corresponding multipartitions but in this article the ceiling and floor will always be weights.

\begin{defn} Let $B=B(\Lambda)$ be the crystal of a dominant integral weight $\Lambda$. For any $v  \in B$, we let $\theta(v)=(\theta_0,\dots, \theta_{e-1})$ be the hub of $v$, where 
\[
\theta_i=\la wt(v),\alpha_i^\vee \ra
\]
\noindent The hub is the projection of the weight of $v$ onto the subspace of the weight space generated by the fundamental weights \cite{Fa}.
\end{defn}

By doing numerous calculations of Littelmann paths, we found one particular class, which we call unidirectional,  that was particularly easy to handle. 
We consider the monotone sequence of weights of our rank $2$ affine Lie algebra, $\psi_1=s_0\Lambda$,   $\psi_2=s_1s_0\Lambda$, \dots  $\psi_m=\dots s_0s_1s_0\Lambda$, where $m$ is the number of reflections. For each $m$ we define an integer $d_m=rm-b$, which will be the number of rows in the multipartion corresponding to that weight. We will demonstrate later that the hub $\theta_m$ of $\psi_m$ is $[-d_m, d_{m+1}]$ if $m$ is odd and $[d_{m+1},-d_m]$ if $m$ is even.

\begin{defn} A Littelmann path for type A and rank $2$ will be called \textit{unidirectional}  if there is an integer $k \geq 0$ and a strictly descending sequence of positive integers of the same parity, $m_1,m_2,\dots m_{k}$,  a sequence of positive integers, $I_{m_1}, I_{m_2},\dots I_{m_{k}}$, an integer $m_{k+1}$ with $0 \leq m_{k+1}<m_k$,  and a non-negative sequence of integers $C_1,C_2,\dots, C_{k+1}$, such that  the projection of the path onto the hubs consists of two parts:
\begin{itemize}
\item The \textit{main} part, which is entirely contained in the second or fourth quadrant.  Let $\epsilon$ be the residue corresponding to the positive coordinate of the quadrant.  Its hubs are $\theta_m$ for $m_{k+1} \leq m \leq m_1$. so that $\epsilon =1$ in the second quadrant and $\epsilon=0$ in the fourth quadrant. Starting at the origin, the main part consists of $k$ \textit{long paths} of  positive integral $\epsilon$-length $I_{m_i}$, which are multiples of $\theta_{m_i}, i \leq k$, followed by a sequence of paths of the form $\frac{C_i}{d_{m}d_{m+1}}\theta_m$, for $m=m_i,m_i-1,\dots,m_{i+1}+1$, which we call \textit{oscillating paths}. Note that, if $C_i \neq 0$ so that the set of oscillating paths is non-empty, the first of the oscillating paths is a continuation of the long path with index $i$.
\item The \textit{seed} part, whose hub may contain or not contain any of the following:
\begin{itemize}
 \item A \textit{transit path}  $\frac{1}{d_{m_{k+1}+1}}\theta_{m_{k+1}}$, which may be embedded in a longer path.
\item A tail, which is a multiple of $\theta_0$ or of $\theta_1$
\item A long path which is a multiple of $\theta_1$ or $\theta_2$, , together with an oscillating path.
\item An oscillating set of straight paths, multiples of $\theta_u$ for $u \leq m_{k+1}$ with coefficient $\frac{C_{k+1}}{d_{u}d_{u+1}}$, where $C_k=C_{k+1}+d_{m_{k+1}}$.                                                              
\end{itemize}
In our main theorem, we will determine the exact significance of these elements of the seed part, but the intention of the name is that the transit path and the tail are the seeds from which new long paths  can grow.
\end{itemize}
\end{defn}

\section{MULTIPARTITIONS}

We now turn to the second labelling which will concern us in this paper, the multipartitions.  Unlike the Littelmann paths, this labelling is available only is the important case of type A.  Also unlike the Littelmann paths, it requires some non-canonical choices, a choice between two dual versions of the theory and the choice  of an object called the multicharge. Once these choices have been made, the aim of the paper will be to find conditions on the multipartition which insure that the corresponding Littelmann path will be unidirectional. We will then be able to read off the structure of the Littelmann path directly from the multipartition.

A \textit{multicharge} will be a sequence $s=(k_1,k_2,\dots,k_r)$ of integers with $0 \leq k_i \leq e-1$, for some natural number $r$ which will be called the $level$ of the multicharge. 

A multicharge $s$ determines a dominant integral weight
\[
\Lambda=\Lambda_{k_1}+\dots+\Lambda_{k_r}.
\]
\noindent In the abelian weight space the order of the summands is of course irrelevant; it is simply a notational reminder of the multicharge. We will follow Mathas in \cite{M} in requiring $k_1 \leq k_2 \leq \dots \leq k_r$.  We can then summarize by setting 
\[
\Lambda=a_0 \Lambda_0+\dots+a_{e-1}\Lambda_{e-1}
\]

 We can regard the integers $k_i$ as  elements of $\mathbb Z / e \mathbb Z$, and with a slight abuse of notation we will identify $k_i$ with this residue. 
We have to assume that we know the ceiling and floor of an multipartition, constructed  in \cite{AKT} from an LS-representation by letting the ceiling be $\nu_1$ and the floor be $\nu_s$.

 The Young diagram of the multipartition of a defect $0$ weight $\lambda$ will be represented by $Y(\lambda)$. If the $t$-th partition $\lambda^t$  of $\lambda$ is nonempty, then we associate to each node in the Young diagram a residue, where the node $(i,j)$ is given residue
\[
k_t +j-i
\]
This will be called a $k_t$-corner partition.  Due to the Mathas condition on the ordering of the $k_i$, this means that the multipartition will consist of $a_o$ $0$-corner partitions, followed by $a_1$ $1$-corner partitions and so forth.

\subsection{Standard LS-paths}

The main theorem of this paper will be a proof that a strongly residue homogeneous multipartition corresponds to a unidirectional Littelmann path. However, we do more:  we show that the LS-repesentation of the Littelmann path can be written down  directly from the Young diagram of the multipartition. This close connection between the multipartiton and the Littelmann path will be crucial in carrying out the induction in  our main theorem. An LS-representation for which both the defect $0$ weights and the parameter endpoints can be determined from the multipartition in the way we describe  will be called standard.   We will give the definition in general for any rank, but will apply it only in the case $e=2$.   We must first prepare considerable notation.

 An important part of the description of each partition $\lambda^t$ in the multipartition $\lambda$ is the length $a(\lambda^t)$ of the first row. Since these lengths decrease from the ceiling to the floor, the ascending numbering from ceiling to floor used in 
  \cite{K2} or \cite{AKT} is not compatible with our notation, so we will reverse the numbering.  We  assume that  we have fixed a  reduced word $s_{i_q}\dots s_{i_1}$ and the sequence of defect $0$ weights $\mu=\mu_q, \mu_{q-1},\dots, \mu_{0}$, with $\mu_j=s_{i_j}\mu_{j-1}$,  going back to $\Lambda$. By the partial ordering of defect $0$ weights which we introduced above, we thus have $\mu_j > \mu_{j-1}$ and in fact, since we are using simple roots, they  will have $dist(\mu_j , \mu_{j-1})=1$. That will be important, because our $a$-chains will be strings of these $\mu_j$.

 If we let $d_j$ be the number of times we must operate on $\mu_{j-1}$ with $f_{i_j}$ to produce the reflection $s_{i_j}$, then in terms of the multipartitions, we have 
\[
d_j=\#(Y(\mu_j)-Y(\mu_{j-1}))
\]
that is to say, the number of nodes in the difference between the two Young diagrams. The $d_j$ increase as $j$ increases.  Since all the added nodes have the same residue, we never add more than one to any given row. Now we consider the set
\[
((Y(\mu_j)-Y(\mu_{j-1}))\cap Y(\lambda))
\]
which is called a \textit{ladder} in the article by Fayers \cite{Fa2} generalizing the LLT-algorithm \cite{LLT} for canonical bases. If the ladder intersects the top row of the multipartition at node $m$ of residue $i$, we will call this the $m$-ladder, whereas for Fayers it was an $i$-ladder.  Not only the proof but even the statement of our main theorem will depend on the lengths of these ladders, which we denote by
\[
c_m=\#(((Y(\mu_j)-Y(\mu_{j-1}))\cap Y(\lambda)).
\]
Clearly $c_j \leq d_j$ for all $j$.  However, the $c_i$ do not necessarily increase as $j$ increases.

\begin{defn} Given a choice of reduced word and, as above,  a sequence  $\mu_q, \mu_{q-1},\dots, \mu_{0}$ of defect $0$ weights  set
\[
e_j=\frac{c_j}{d_j} 
\]
for $j>0$ and set   $e_0=1$. Let 
\[
j_p, j_{p-1},\dots,j_0
\] 
 be the subsequence of $q, q-1,\dots,0$ of $j$ for which  $e_j \neq e_{j+1}$
 so the $\mu_{j_0}$ will be the floor.  Then the LS-representation
\[
(\mu_{j_p},\dots, \mu_{j_0};e_{j_p},\dots,e_{j_0})
\]
will be called \textit{standard} with respect to the multipartition $\lambda$ if the sequence of $e_i$ is increasing and the Littelmann path determined by this LS-representation corresponds in the crystal to the multipartition.
\end{defn}

\begin{examp}(A standard LS-representation with gaps)\label{gaps}
Let $e=3$. let $\Lambda=3\Lambda_0$  and set $\lambda =[(8,6,1),(),()]$. From our computer calculations, we know that the ceiling is the defect zero  multipartition with  weight 
$-12\Lambda_1 +15\Lambda_2-24 \delta$. This  corresponds to three copies of the partition $(8,6,4,2)$, and equals three times the weight of a single copy of that partition in $B(\Lambda_0)$. This ceiling has a periodic representation as $s_1s_0s_2s_1s_0s_2s_1s_0\Lambda$. The first ladder is the $8$-ladder with residue $1$, consisting of positions $8$ and $6$ in the first two rows.  For $m=7$, we also get an $m$-ladder of length $2$, consisting of positions $7$ and $5$ in the first two rows, and similarly for $m=6$,  we get a ladder of length $2$, so that $c_8=c_7=c_6=2$. Now, however, with the $5$-ladder of residue $1$ we get $c_5=3$. 

 If we calculate the $d_j$, descending from $8$ to $1$, we get $12, 12,9,9,6,6,3,3$.  The 
$c_j$ in the same order are $2,2,2,3,2,2,1,1$. The quotients are then $\frac{1}{6},\frac{1}{6}, \frac{2}{9}, \frac{1}{3}, \frac{1}{3}, \frac{1}{3}, \frac{1}{3}, \frac{1}{3}$. Finally, we get $e_0=1$ with weight $\Lambda$. Since some of the adjacent fractions are identical, this means that we do not need the entire sequence $\mu_8,\mu_7,\dots,\mu_1$ but that, in writing a standard LS-representations, the $\mu_j$ corresponding to differences which are equal to zero should be omitted.  
 When we eliminate all $j$ for which $e_j=e_{j+1}$, 
we are then left with $\mu_8,\mu_6,\mu_5,\Lambda$ as the weights, and quotients  $\frac{1}{6}, \frac{2}{9}, \frac{1}{3},1$ as the fractions. Since we take difference between the fractions, the actual Littelman path is
\[
\frac{1}{6}\mu_8, \frac{1}{18}\mu_6, \frac{1}{9}\mu_5,\frac{2}{3}\Lambda,
\]
\noindent with the coefficients summing to $1$ as they should.
\end{examp}

We are dealing, in this paper, with the $e$-regular case, in which addable node are added from the top right down to the bottom left, as distinguished from the
 $e$-restricted case in \cite{Kl} and \cite{AKT} where the addable nodes are added from the bottom left to upper right.  In trying to reconstruct the multipartition in a non-recursive fashion from the Littelmann path in the form we have written it, all we have is the number of nodes of $Y(\lambda)$ intersecting $(Y(\mu_j)-Y(\mu_{j-1}))$, not their location.  However, if they are fact located from the top down, then we could indeed reconstruct $\lambda$ if we knew the reduced word, giving us the $d_m$,  and the fractions $e_m$ correponding to the various $\mu_j$, which would give us the $c_m $ as $d_me_m$.

\section{THE CASE $\Lambda=a\Lambda_0+b\Lambda_1$}

We consider the case $e=2$, with multicharge $s=(0,0,\dots,0,1,1,\dots,1)$.   If there are $a$ copies of $0$ and $b$ copies of $1$, with level $r=a+b$, then when we fill in residues in the nodes of the Young diagram of a multipartition with this multicharge, there will be $a$ of the  $0$-corner partitions followed by $b$ of the
$1$-corner partitions.
   A partition in which the parts  alternate between odd and even will be called \textit{alternating}.
\begin{defn} A multipartition for $e=2$ will be \textit{residue-homogeneous} if 
\begin{itemize}
\item Every nonempty partition is alternating.
\item All $0$-corner partitions have the first line of the same parity, and the $1$-corner partitions have the first line of the opposite parity. The last row of every non-zero partition, except possibly the last, is odd.

\end{itemize}
\end{defn}

\begin{defn} A multipartition for $e=2$ will be \textit{strongly residue-homogeneous} if 
\begin{itemize}
\item It is residue homogeneous, and
\item  For each partition $\lambda^i$ after the first partition, if the top row has length $n$, the previous partition ends with a triangle of $n-1$ rows and columns if both have the same corner residue, and if it is the top partition of the  the $1$-corner partitions, then the previous partition ends with a triangle of length $n-2$ if it has fewer rows than  $\lambda^i$, and of $n$ if it has $n$ or more rows.                                                                                                                                                                                                                                                                                       
\end{itemize}
\end{defn}

We recall that, by the dual of Mathas' result  Prop. 4.9  in \cite{M}, a multipartition is $2$-regular if and only if  the length $a(\lambda^t)$  of the first row of a partition is  less that or equal to the number of rows  $\ell(\lambda^{t-1})$ in the previous partition, except on the boundary between $0$ and $1$, where, for words beginning with $s_1$, it can be greater by $1$.  If the top row of the first $1$-corner partition is not greater by $1$, we say that the multipartition is non-increasing.

\begin{lem}
A strongly residue-homogeneous multipartition is $2$-regular.
\end{lem}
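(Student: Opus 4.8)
The plan is to reduce the claim to the rank-$1$ case (Lemma on alternating partitions with odd last row being external), handled row-by-row across the component partitions of the multipartition, by checking the crystal condition $e_i(b)=0$ whenever $\theta_i\ge 0$. First I would recall that in the $e=2$ cyclotomic Hecke setting the operators $e_0,e_1$ act by removing removable nodes of residue $0$ or $1$ respectively (in the appropriate order dictated by the choice $\Lambda=a\Lambda_0+b\Lambda_1$ and the ordering $k_1\le\dots\le k_r$), and $\theta_i$ is the difference between the number of normal addable and removable nodes of residue $i$ accumulated over all components. So I must show: for each $i\in\{0,1\}$ with $\theta_i\ge0$, there is in fact no normal node of residue $i$ to remove, i.e. $e_i$ kills the multipartition.

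The key structural input is the residue-homogeneity hypothesis. Because every nonempty component is alternating with odd last row, within each single component all the addable nodes at the ends of rows share one residue, call it $\epsilon_\ell$ for component $\lambda^\ell$, and all the removable nodes share residue $1-\epsilon_\ell$ — this is exactly the computation in the proof of the earlier Lemma that alternating partitions with odd last row are external in rank $1$. The second bullet of the definition forces $\epsilon_\ell$ to be constant on the $0$-corner components and constant (with the opposite value) on the $1$-corner components; the third bullet (the nesting of triangles of size $n$, or $n\pm1$ on the $0/1$ boundary) guarantees that when one passes from component $\ell$ to component $\ell+1$ the addable node opened at the top of $\lambda^{\ell+1}$ does not create a removable node of the ``forbidden'' residue and does not leave any removable node of $\lambda^\ell$ unbracketed. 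Concretely, I would argue that under residue-homogeneity every removable node of residue $1-\epsilon$ (where $\epsilon$ is the common addable-residue on the relevant block of components) is cancelled in the crystal bracketing by an addable node of the same residue lying weakly below it, so $e_{1-\epsilon}$ has nothing normal to act on; and there are simply no removable nodes of residue $\epsilon$ at all. Hence whichever of $\theta_0,\theta_1$ is $\ge0$, the corresponding $e_i$ annihilates the multipartition.

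I would then close the argument by induction on the number of nodes, exactly as in the rank-$1$ Lemma: applying $s_\epsilon$ (equivalently, a maximal string of $f_\epsilon$'s after an $e_\epsilon$-reduction) to a residue-homogeneous multipartition yields another residue-homogeneous multipartition with fewer nodes — the alternating-with-odd-last-row property of each component is preserved because one removes one node from each row, the parity pattern across components is preserved, and the triangle-nesting condition is preserved — and the empty multipartition is trivially external. Therefore every residue-homogeneous multipartition is external.

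The main obstacle I anticipate is the bookkeeping at the boundaries between components, and in particular between the last $0$-corner component and the first $1$-corner component, where the third bullet allows the first row to differ by $\pm1$ from the adjacent triangle: there one must check carefully that the crystal bracketing still pairs off every removable node of the forbidden residue, since the residues at the corners shift there. Handling this cleanly — rather than by a long explicit node-by-node case check — will likely require isolating a small lemma stating that adding a triangular $(n)$- or $(n\pm1)$-shaped component on top of an alternating component with matching parity contributes only cancelling pairs to the $i$-signature for the forbidden residue $i$; this is the technical heart, and everything else is a routine reduction to the rank-$1$ argument already given.
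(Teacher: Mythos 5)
There is a genuine gap, and it sits exactly at the residue bookkeeping that the definition of residue-homogeneity is designed to control. With the convention that the node $(i,j)$ of a $k_\ell$-corner component has residue $k_\ell+j-i$, a $0$-corner component whose first row has length $n$ has all its row-end (removable) nodes of residue $n-1 \bmod 2$, while a $1$-corner component with first row of length $m$ has row-end residue $m \bmod 2$; since the definition forces $m$ and $n$ to have \emph{opposite} parity, these two residues \emph{coincide}. So the common addable residue $\epsilon$ is the same on the $0$-corner and the $1$-corner components, not opposite as you assert. That uniformity is the entire content of the hypothesis, and it is what the paper's (very short) proof uses: every removable node of $\lambda$ has the one residue $1-\epsilon$, every addable node at the end of a row or below the odd last row of a nonempty component has residue $\epsilon$, hence $e_\epsilon(\lambda)=0$ simply because there is no removable $\epsilon$-node at all, while $\theta_{1-\epsilon}$ is negative (it is minus the number of removable nodes, up to the addable nodes of empty components), so the externality condition is vacuous for the residue $1-\epsilon$.

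Because of the sign slip, your proposal instead sets out to prove that \emph{both} $e_0$ and $e_1$ annihilate $\lambda$: you claim there are no removable $\epsilon$-nodes and that every removable $(1-\epsilon)$-node is cancelled in the bracketing by an addable $(1-\epsilon)$-node below it, so that $e_{1-\epsilon}(\lambda)=0$ as well. This cannot be repaired: the only crystal element killed by every $e_i$ is the highest-weight element, and already in rank $1$ the alternating partition $(8,7,4,1)$ has all removable nodes of residue $1$ and no addable $1$-nodes, so $e_1\neq 0$; it is external only because $\theta_1<0$ makes the condition for $i=1$ empty. The missing idea is therefore not a cancellation lemma at the component boundaries (once the residues are computed correctly, the $0/1$ boundary with its $\pm 1$ shift contributes nothing of the forbidden residue, so the ``technical heart'' you anticipate evaporates) but the observation that only the residue $\epsilon$ can have $\theta_i\ge 0$, and for that residue there is nothing for $e_\epsilon$ to remove. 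The only genuinely delicate point in this direction is the addable $(1,1)$-nodes of empty components, whose residue is the corner residue and may equal $1-\epsilon$; your argument does not address these either. Finally, the closing induction on the number of nodes via $s_\epsilon$ is unnecessary, since being external is a condition on $\lambda$ itself, though it is harmless.
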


\begin{proof}
We begin with two adjacent partitions $\lambda^{t-1}$ and $\lambda^t$ in the multipartition which have the same corner residue, either $0$ or $1$. By the definition of residue homogeneous, the top rows have the same parity.  Letting $n=a(\lambda^t)$, the length of the top row of the lower partition, then by the condition that the multipartition be \textit{strongly} residue homogeneous, the upper partition ends in a triangle with at least $n-1$ rows.  Thus $\ell(\lambda^{t-1}) \geq n-1$.
Because $\lambda^{t-1}$ is a partition with no two rows identical, $a(\lambda^{t-1}) \geq \ell(\lambda^{t-1})$. By the condition that the parities are the same, we get that $\ell(\lambda^{t-1}) \geq n$, which is the first condition needed for the dual Mathas result quoted above the lemma. At the boundary between $0$-corner partitions and $1$-corner partitions, we no longer have the parities equal, but we get as before that  $\ell(\lambda^{t-1}) \geq n-1$, and this is enough.
\end{proof}

 For the case of $e=2$, there are only two families of reduced words.  The words with $s_0$ first on the right, i.e., $\dots s_1s_0$,  give the defect $0$ weight whose multipartitions have the upper triangular partitions larger  will be denoted by   $\psi^-_m, m>0$. The words with $s_1$ on the right, i.e. $\dots s_0s_1$, give the defect $0$ weights  $\psi^+_m, m\geq 0$ whose upper triangular partitions are smaller.
 All the $0$-corner partitions have the same 
first row $m$, and all the $1$-corner partitions all have the same length first row, which is either $m-1$ or $m+1$, respectively. When $b=0$, we will use $\psi_m^-$ by default.

The floor and ceiling in the case $r=1$ are determined by the length  $\ell(\lambda)$ and  $a(\lambda)$ of the first column and first row, respectively.  The situation for $r>1$ is considerably more complicated. 
In the case of general $e$ for which we defined the standard LS-representation, the denominator $d_m$ depended on the choice of reduced word. 

We now define $d^\pm_n$ to be the number of rows in a defect $0$ multipartition corresponding to a word beginning on the right  with $s_1$ or $s_0$, respectively, where $n$ is the length of the first row.  Since all partitions with $0$ in the corner have $n$ rows, and all partitions with $1$ in the corner have $n \pm 1$ rows, we get
\[
d^\pm_n=an+b(n \pm 1)=rn \pm b=r(n \pm 1)\mp a
\]
The corresponding projection of the weight to the hub is 
\[
\theta^\pm_{2s}=[d^\pm_{2s+1},-d^\pm_{2s}]
\]
\[
\theta^\pm_{2s+1}=[-d^\pm_{2s+1},d^\pm_{2s+2}]
\]
In order to avoid writing every hub twice for the odd and even cases, we let $\phi$ be the operation of interchanging the $0$ and $1$ coordinates and then can write 
\[
\theta^\pm_{m}=\phi^m[d^\pm_{m+1},-d^\pm_{m}]
\]
\noindent Note that when $a=1$ and $b=0$, $d_m^-=m$.

\begin{defn} The \textit{segments} are defined by putting together rows whose lengths drop by only one, with the following exception:
if the length of the first row of the partition equals the number $\ell$  of rows in the previous partition, or, at the boundary between $0$ and $1$, equals $\ell \pm 1$, then it does not start a new segment. The segment boundary is  the line between the last row of the old segment and the first of the new. For strongly residue-homogeneous multipartitions, the segments will correspond to the long paths of the corresponding unidirectional Littelmann path.
\end{defn}

\begin{defn}
We number the segments from $1$ to $k$, and for segment $i$  
\begin{itemize}\label{segnot}
\item We let $v_i$ be the number of partitions before the end partition of the segment, where $v_i=0$ if the segment ends in the first partition.
\item  Inside the last partition intersecting the segment, we let $t_i$ be the number of  rows inside the partition down to the last row of the segment.
\item We let $z_i$ be a Boolean parameter equal to $1$ if the segment ends in a singleton and $0$ if not. 
\item Let $n_i$ be the length of the first row of the segment as before.  If the segment starts at the top of a $0$-corner partition, then set $n_i'=n_i$, and at the top of a  $1$-corner partition, $n_i'=n_i \mp 1$.  If it starts in the middle of  a $0$-corner partition, set $n_i'=n_i +t_{i-1}$ and in the middle of a $1$-corner partition, $n_i'=n_i+t_{i-1}\mp 1$.  

\end{itemize}
\end{defn}
 Because the partitions are alternating and the first rows of $0$-corner partitions all have the same parity opposite to that of the $1$-corner partitions, all the $n_i'$ will have the same parity.  We also have that 
$n_1'>n_2'> \dots> n_k'$. At each segment boundary, we extend the bottom row of segment $i$ upward to the top of the partition, which it will hit at $n_i'$, and this will be less than the point $n_{i-1}'$ at which the segment $i-1$ will hit the top of the partition, since otherwise the two segment would have been combined into one.
\begin{remark}\label{slice}
The segment corresponds to  a number of adjacent rows in the multipartition with weight $\psi_{n_i'}^-$ or  $\psi_{n_i'}^+$. If $\lambda$ is a non-increasing partition, we will use ``-'' and if it is an increasing partition at the boundary between $0$-corner and $1$ corner partitions, we use $\psi_{n_i'}^+$.  The default, when the multipartition does not contain non-empty $1$-corner partitions, will be ``-''.
\end{remark}

The proof of our main theorem, that a strongly homogeneous multipartitions has a unidirectional Littelmann path, involves a double induction.  The main induction is on the length $n$ of the top row of the multipartition, but there will be a further internal induction on the segments.  In the course of this second induction we will need to know not only the lengths of various ladders, but also the lengths of the intersections of the ladders with the various segments. In order to calculate this precisely, we need more notation.

 Letting $m$ be a number satisfying $n_{i+1}'<m \leq n_i'$,  we set $v^m=v_i$, the number of partitions above the partition containing the last row of the segment. 

Our aim is to find a formula for the  length of the $m$-ladder that contains the  $m$th-node in the top row of the multipartition.  The integer $v^m$ will allow us to find the length of the intersection of the $m$-ladder with the partitions above the end partition, and now we must define a new quantity to measure the length of the part of the $m$-ladder in the last partition of the segment.  If the $m$-ladder ends on the last row of the segment, then this length is $t_i$, and particularly if  $m=n_i'$,  the number of rows in the last partition intersection the segment.  If the $m$-ladder ends in the first column, then the length of its intersection is $m$ or $m \pm 1$, depending on whether we are in a $0$-corner partition or a $1$-corner partition.  In either case we get a number less than or equal to $t_i$.  Therefore we define

\[
t^m=
\begin{cases}
\min(t_i,m), &v_i < a\\
\min(t_i, m \pm 1), &v_i \geq a
\end{cases}
\]
for each such $m$, where we use the option $``-"$ for non-increasing multipartitions.
We can combine the two cases as 
\[\min(t_i, m \pm 1_{v_i \geq a}).\]
 The integer $t^m$ was defined to give exactly the number of nodes in the intersection of the ladder with the last partition.

 We can calculate $d^\pm_m$ as above, and we can now give a algebraic formula for the lengths of the $m$-ladders. 

\[
c_m=
\begin{cases}
v^m\cdot m +t^m,&v^m \leq a\\
a \cdot m +(v^m-a) \cdot (m \pm 1) +t^m &v^m>a
\end{cases} 
\]

If we use the standard function $x_+=\max(x,0)$, then we can unite the two cases into a single formula
\[
c_m=v^mm\pm(v^m-a)_++t^m.
\]

If this is in segment $i$, we get
\[
c_m=v_im \pm(v_i-a)_++\min(t_i, m \pm 1_{v_i \geq a}).
\]

 The value of $c_m$ will change in the middle of the segment $i$ whenever $v_i>0$ and also if $t^m$ depends on $m$.  We always have $t^m=t_i$ when $m=n_i'$, but if the segment ends in a singleton or if $i=k$ and we pass to the seed part of the path, we can have  $t^m<t_i$.   
As in the introduction, we set
\[ e_m=\frac{c_m}{d_m}
\]

In the  induction step of the main theorem, we will be required to prove that each segment corresponds to a straight path of integral length in the positive coordinate of the hub, whose length equals the number of addable nodes in that segment.  We will be required to express this integral  number in terms of various numbers associated with the multipartition, which we now define.
\begin{defn}

In order to deal with cases $n_{i+1}'<m<n_i'$, we extend the Boolean parameter $z_i$ from Def. \ref{segnot}  by setting $z^m$ equal to $0$ if the intersection of the segment with $\psi^\pm_m$ does not  end with a singleton, and equal to $1$ if it does.

To deal with the last segment, we let $n'_{k+1}=0$ if it ends in a singleton, i.e., $z_k=1$.  if the last row is longer than $1$, we prepare the way for starting a new segment at the end of the current partition by setting $z_{k+1}=1$ if the last row is odd and  $z_{k+1}=0$ if the last row is even,   and set   $n'_{k+1}=t_k \mp 1_{v_i \geq a},n_{k+2}=0.$   When $m=n_{k+1}'$, let $z^m =z_{k+1}$.Finally, 
define
\[
I_m=c_m-c_{m+1}+v^m+z^m.
\]
\end{defn}

\begin{lem}\label{ri} We divide into cases:
\begin{itemize}
\item If  $m=n_i'$ for $i \leq k$, then $I_m=c_m-c_{m+1}+v^m+z^m$ is the number of addable rows in the segment $i$, with  
\begin{align*}
I_{n_i'}&=(v_i-v_{i-1})(n_i'+1) \pm ((v_i-a)_+-(v_{i-1}-a)_+)\\
&+(t_i+z_i-\min(t_{i-1},n_i'+1 \pm 1_{v_i \geq a}))
\end{align*}
\item If  $n_{i+1}'<m<n_i'$, for $1 \leq i \leq k+1$, then $I_m=0$. 
\item If $m=n_{k+1}'>0$, then $I_m=z_{k+1}$. 
\end{itemize}

\end{lem}

\begin{proof}
If $m=n_i'$ for $i \leq k$, then $t^m=t_i, v^m=v_i, z^m=z_i, $ and  $c_{m+1}$ belongs to segment $i-1$.  We then get 
\begin{align*}
I_m&=c_m-c_{m+1}+v^m+z^m\\
&=(v_im \pm(v_i-a)_++t_i)\\
&-(v_{i-1}(m+1) \pm(v_{i-1}-a)_+)+\min(t_{i-1},m+1 \pm 1_{v_{i-1} \geq a}))+v_i+z_i.\\
&=(v_i-v_{i-1})(n_i'+1) \pm((v_i-a)_\pm-(v_{i-1}-a)_+)\\
&+(t_i+z_i-\min(t_{i-1},n_i'+1 \pm 1_{v_{i-1} \geq a}))
\end{align*}

It remains to show that this is the number of addable nodes in segment $i$. All the addable nodes of segment $i$ lie in the $n_i'+1$ ladder. The number of possible internal partitions of the segment is  $v_i-v_{i-1}$ .  This would give us 
$(v_i-v_{i-1})(n_i'+1)$ if they were all $0$-corner partitions, but to adust for the possibility that some of the partitions are $1$-corner partitions and are thus larger or smaller by $1$, we must add an adjustment term $\pm((v_i-a)_+-(v_{i-1}-a)_+)$. Now from the last partition, there are an additional $t_i+z_i$ addable nodes in the segment.  If the segment $i-1$ ends in middle of a partition, we have to remove $t_{i-1}$, since those nodes belong to the previous segment.  If the segment ends at a segment boundary, we need to remove only the nodes in the $n_i'+1$-ladder, of which there are $n_i'+1 \pm 1_{v_{i-1} \geq a}$.

If $n_{i+1}' <m<n_i'$, then $c_{m+1}$ belongs to  the same segment $i$, so $c_m-c_{m+1}=-v_i-z_i$, which means that $I_m=0$.

If  $n_{k+2}' <m \leq n_{k+1}$, then we are not in a real segment, but in a virtual segment, which will only become a real segment if we decide to add the one addable node in the next row when $z_{k+1}=1$ .  We have  $c_m-c_{m+1}=-v_k-z^{m+1}$, so $I_m=z^m-z^{m+1}$.  If $m<n_{k+1}'$ , then   $z_{m+1}=z^m=1$ so again $I_m=0$ in that case.

 If $m=n_{k+1}'>0$, then $z^{m+1}=0$ and $z^m=z_{k+1}$, and so $I_m=z_{k+1}$ and thus is either $0$ or $1$, depending on whether there is or is not an addable node.

\end{proof}

The integer $I_{n_i'}$ is the number of times we must operate by the appropriate $f_\epsilon$ in order to fill in the segment to the next level, and we are interested in having this as our denominator for the long paths.  
\begin{examp}
For $e=2, \Lambda=3\Lambda_0+2\Lambda_1$, the multipartition

 \[
 [(11,10,7,6,5,4,3,2,1),(7,6,5,4,3,2,1),(7,6,3,2,1),  (2,1), \emptyset]
\]
\noindent has five segments, containing, $2, 7,9,3,2$ rows where the respective segment pairs $(v_i,t_i)$  are 
$(v_1,t_1)=(0,2), (v_2,t_2)=(0,7), (v_3,t_3)=(2,2),(v_4,t_4)=(2,5), (v_5,t_5)=(3,2) $.

Since $c_{12}=0$ and $v^{11}=z^{11}=0$, we get $I_{11}=c_{11}=2$. For the second segment, we have $n_2'=9$, so $I_9=c_9-c_{10}+v^9+z_1=9-2+0+1=8$. For the third segment, we use $c_7=16, c_8=8, v^8=2$, so $I_7=16-8+2=10$ .  The fourth segment has $3$ rows, and we get  $I_5=c_5-c_6+v^5+z_4=15-14+2+1=3$. In the fifth and last segment we have by $I_2=c_2-c_3+v^2+z_5=8-9+3+1=3$. All the remaining $I_m$ are zero. The last row ends in a singleton, so there is no virtual segment $k+1$.

\end{examp}

\begin{lem}\label{built} Every $2$-regular non-increasing strongly residue-homogeneous multipartion $\lambda$ can be built up from the empty partition by a sequence of operations  of the following two types:
\begin{enumerate} 
\item (Widening) Adding every addable node down to the segment boundary of the last non-zero  segment, giving $\lambda^*$.
\item (Deepening) Adding every addable node down to the end of $\lambda$ and then one singleton, in the same partition if the last row is odd and in the next partition if the last row is even and not at the boundary from $0$-corner to $1$ corner residues, to get $\bar \lambda$ and then, if desired, adding singletons of the same corner  residue to some of the addable nodes in  previously empty partitions to get $\tilde \lambda$. A  deepening operation will always be followed by a widening or by a deepening which does not add a new partition with the same residue in the corner.
\end{enumerate}
\end{lem}

\begin{proof}
Induction on the number $m$  of nodes in the first line of the multipartition $\lambda$.   Assume that the lemma is true for every  strongly residue-homogeneous multipartition with first line less than $m$. If the last row  of the last non-zero segment  is  not a singleton, then removing  a  node from each row above the segment boundary is a reversible operation whose inverse is a widening, producing a strongly residue-homogeneous multipartition with first row $m-1$, to which we can apply the induction hypothesis. The second condition in the definition of  strongly residue homogeneous is stable because, though we reduce the triangles at the ends of each partition  except the last by removing one node from each row, we also reduce the first row of each partition by one node.

 If the last non-zero row is a singleton, then we have two cases.  If it is the last row of a larger partition, we apply a reversible operation whose inverse is a deepening, removing one node from each non-zero row. The last non-zero row will vanish. The number of rows in the triangular tails will be reduced by $1$, but so will the first row of each partition.  We are reduced to a strongly residue-homogeneous multipartition with top row $m-1$ and we apply the induction hypothesis.

We will call the partition $(1)$ a solitary partition. If the singleton is a solitary partition, then we must remove all the solitary partitions above it.  If it is in the $1$-corner partitions, then this process must stop at or before the boundary between $0$-corner and $1$-corner partitions, because at that point the residue of a solitary partition changes from $1$ to $0$, and our multipartition is residue homogeneous. If it is a $0$-corner partition, then if all the partitions are solitary, we have gotten back to the empty partitions.   We now consider two subcases.

\begin{itemize}
\item Suppose that the corner of the upper solitary partition and the last non-solitary partition have the same corner residue. If the last row of this lowest non-solitary partition is a singleton, then we remove one node from each row and have a deepening.  If it is not a singleton, then by the definition of strongly
 residue-homogeneous, it must be odd and removing one node from each row  will give an even last row and we will have a deepening which adds partitions.

\item Suppose the corner of the  solitary partition has corner residue $1$ and  the last non-solitary partition has corner residue $0$.  Then by the definition of strongly residue-homogeneous the last $0$-corner partition must end in a singleton, and we continue by the reverse of a deepening.
\end{itemize}

\end{proof}

Our aim is to show that we have  long paths alternating with oscillating short paths. The lengths of the long path corresponding to segment $i$ will correspond to the number of $\epsilon$-addable nodes in the segment.  The hub of the long path corresponding to segment $i$ is a multiple of $\theta_m$, where $m=n_i'$. The positive coordinate of $\theta_m$ has value $d_{m+1}$ and, in a standard Littelmann path, the coefficient is $e_m-e_{m+1}$. We begin with a technical lemma, which, when combined with Lemma \ref{ri}, will do most of the work needed to demonstrate unidirectionality.

\begin{lem}\label{q} Let $\lambda$ be a $2$-regular strongly residue-homogeneous multipartition with standard LS-representation. 
Let $m$ be an integer such that   $n_{i+1}'<m \leq n_i'$. The coefficient $q^\pm_m=e_m-e_{m+1}$ of the hub  $\theta^\pm_m$ in the Littelmann path can be written in the form 
\[
q_m^\pm=\frac{I_m}{d^-_{m+1}}+\frac{C_i}{d^-_md^-_{m+1}}
\]
\noindent where $I_m =0$ unless $m=n_i'$, in which case
\[
I_{n_i'}=(v_i-v_{i-1})(n_i'+1)-((v_i-a)_+-(v_{i-1}-a)_+)+(t_i+z_i-\min(t_{i-1},n_i'+1-1_{v_{i-1} \leq a}))
\]
and
\[C_i=
\begin{cases}
\mp v_ib\pm r(v_i-a)_++rt_i, &z_i=0, i \leq k, \\
\mp (v_i+1)b\pm r(v_i-a)_+\pm r1_{v_i \geq a}, &z_i=1,~ or~i=k+1
\end{cases}
\]
\end{lem}

\begin{proof}Before deriving the formulae, we note now that we will substitute $d^\pm_{m+1}=d^\pm_{m}+r$  in the numerator  but not in the denominator.  We will also use the formula $d_m^\pm=rm \pm b$.
By definition,
\begin{align*}
 q^\pm_m=&e_m-e_{m+1}\\=&\frac{c_m}{d^\pm_m}-\frac{c_{m+1}}{d^\pm_{m+1}}\\
=&\frac{c_md^\pm_{m+1}-c_{m+1}d^\pm_{m}}{d^\pm_md^\pm_{m+1}}.\\
=&\frac{(c_m-c_{m+1})d^\pm_{m}+rc_{m}}{d^\pm_md^\pm_{m+1}}.
\end{align*}
Now  $I_m=c_m-c_{m+1}+v^m+z^m$ is the numerator we want for $d_{m+1}^\pm$.  In order to get it, we must add and subtract  $(v^m+z^m)d_m^{\pm}$, getting
\begin{align*}
 q_m^\pm=&\frac{(c_m-c_{m+1}+v^m+z^m)d^\pm_{m}}{d^\pm_md^\pm_{m+1}}+\frac{-(v^m+z^m)d^\pm_{m}+rc_{m}}{d^\pm_md^\pm_{m+1}}\\
=&\frac{I_m}{d^\pm_{m+1}}+\frac{-(v^m+z^m)(rm \pm b)+rc_{m}}{d^\pm_md^\pm_{m+1}},\\
=&\frac{I_m}{d^\pm_{m+1}}+\frac{-v^mrm-z^m rm \mp v^mb \mp z^m b+r(v^mm+t^m\pm(v^m-a)_+)}{d^\pm_md^\pm_{m+1}}\\
=&\frac{I_m}{d^\pm_{m+1}}+\frac{\mp v_ib\pm r(v_i-a)_+ \mp z^mb+r(t^m-z^m m)}{d^\pm_md^\pm_{m+1}},
\end{align*}
\noindent where in the last step we cancelled two copies of  $v^mrm$  with opposing signs and substitute $v_i$ for $v^m$.
We now divide into cases, since the values of $z^m$ and $t^m$ may depend on $m$.

\noindent Case 1:  $z_i=0$, $i<k$.

Then $z^m=z_i=0$ for all $m$ satisfying $n_{i+1}'<m \leq n_i'$, and $t^m=t_i$, so by substitution we get 
\[
C_i=\mp v_ib\pm r(v_i-a)_++rt_i
\]
\noindent Case 2:  $z_i=1$.

Then $z^m=z_i=1$ for all $m$ satisfying $n_{i+1}'<m \leq n_i'$, and $t^m=m \pm 1_{v_i \geq a}$, where the additional $1$ is added or subtracted only in the case $v_i \geq a$.  Then $t^m-z^mm=\pm 1_{v_i \geq a}$, so by substitution we get 
\[
 C_i=\mp (v_i+1)b\pm r((v_i-a)_+ +1_{v_i \geq a})
\]

\noindent Case 3:  $i=k+1$:   If $m  \leq n_{k+1}'$, we have $z^m=1, t^m=m\pm  1_{v_i \geq a}$ so then we get, as in Case 2, 

\[
 C_i=\mp (v_i+1)b\pm r((v_i-a)_++1_{v_i \geq a})
\]

\end{proof}
\begin{corl}
 For $i<k$, or $i=k$, and either $z_i=1$ or  $z_k=0$ and $m \pm 1_{v_i \geq a}> t_i$ we have 
\[
C_i=\mp (v_i+z_i)b\pm r(v_i-a)_++r((1-z_i)t_i \pm z_i1_{v_i \geq a})),
\]
\end{corl}
\begin{proof}
In the listed cases,  the choice of formula depends only on the value of $z_i$, so we take a weighted average with weights $z_i$ and $1-z_i$.
\end{proof}

\noindent The number $C_i$ will be called the\textit{ oscillator}, and the important point is that there are two kind of oscillator. The first, called the \textit{widening oscillator}, is for oscillating paths following a segment not ending in a singleton, or between the last segment and the transit path, while the second, which is independent of $t_i$, is only used for after segments which end in a singleton, where it is called a \textit{deepening oscillator}  or for  the oscillating paths after the (possibly empty)  transit path connecting to the (possibly empty) tail, where it will be called the \textit{terminal oscillator}.

When $a=1$ and $b=0$, so that we have a single partition, then $C_i=t_i$.  What makes the general case so much more complicated is that the addable nodes of segment $i$ lie not only at the ends of rows, but also at boundaries between partitions. 
This segment boundary adjustment will occur frequently in our calculations, so denote it by $s_i$, where
\[
s_i=v_i+z_i-v_{i-1}-z_{i-1}
\]  
\begin{lem}\label{dif}
 For $i \leq k$, we have 
\[
C_i-C_{i-1}=rI_{n_i'}-d_{n_i'+1}s_i,
\]
For $i=k+1$, we have 
\[
C_{k+1}-C_{k}=-d_{n_{k+1}'},
\]
If $i=k+2$, with $w$ non-zero partitions and  $y$ added partitions, then 
\[
C_{k+2}-C_{k+1}=
\begin{cases}
yb,&w\leq a,\\
-ya,& r>w>a,
\end{cases}
\]
Thus the terminal oscillator rises in steps of $b$ from $0$ to $ab$ until the primary  tail $\frac{a-w}{a}\theta_0$ is used up, then descends from $ab$ to $0$ is steps of $a=d_1$, in such a way that  the secondary tail, which  coincides with the oscillating path $\frac{C_{k+1}}{d_1d_2}\theta_1=\frac{r-w}{d_2}\theta_1$  in $\bar \lambda$, becomes
$\frac{C_{k+2}}{d_1d_2}\theta_1=\frac{r-w-y}{d_2}\theta_1$ in  $\tilde \lambda$.
\end{lem}
\begin{proof}
The right side of the equation  contains a large term equal to $r(m+1)(v_i-v_{i-1})$ which occurs twice with opposite signs and must be cancelled, so we will begin with the right side of the equation. 
\begin{align*}
rI_m-d_{m+1}s_i&=r\left [(v_i-v_{i-1})(m+1)-((v_i-a)_+-(v_{i-1}-a)_+)\right ]\\
&+\left [(t_i+z_i-\min(t_{i-1},m+1-1_{v_{i-1} \geq a})\right]-(r(m+1)-b)s_i\\
&=r\left [-((v_i-a)_+-(v_{i-1}-a)_+)\right ]\\
&+\left [t_i+z_i-\min(t_{i-1},m+1-1_{v_{i-1} \geq a})\right]-(r(m+1)(z_i-z_{i-1}))+bs_i\\
&=r \left [-(v_i-a)_++(1-z_i)t_i+z_i1_{v_i \geq a} \right ]+(v_i+z_i)b\\
&+r\left [(v_{i-1}-a)_+-(1-z_{i-1})t_{i-1} -z_i1_{v_{i-1} \geq a}\right ]-(v_{i-1}+z_{i-1})b.\\
&=C_i-C_{i-1}
\end{align*}

In the $i=k+1$ case, we have $v_k=v_{k+1}$, and $t_k=n_{k+1}\pm -1_{v_{k+1} \geq a}$, so
\begin{align*}
C_{k+1}-C_{k}&= \left [(v_{k+1}+1)b \pm r(v_{k+1}-a)_+\pm r1_{v_{k+1} \geq a} \right ]\\
&+\left[ v_k b \pm r(v_k-a)_++rt_k \right ].\\
&=b\pm r1_{v_{k+1} \geq a}-rt_k \\
&=b\pm r1_{v_{k+1} \geq a}-r(n_{k+1}\pm 1_{v_{k+1} \geq a})\\
&=b-r(n_{k+1}')\\
&=-d_{n_{k+1}'}
\end{align*}

If $w$ is the number of non-empty partitions in $\lambda$, then from the formula for the second oscillator, substituting $w$ for $v_k+1$, we have that the terminal oscillator $C_{k+1}$ is
\[C_{k+1}=
\begin{cases}
wb,&w\leq a,\\
(r-w)a,& r>w>a,
\end{cases}
\]
The new terminal oscillator $C_{k+2}$ will be the same with $w$ replaced by $w+y$, all within the $0$-corner or $1$-corner partitions, since we are adding the $y$ solitary partitions with a single residue.

\end{proof}

\begin{examp} Consider $e=2$, $a=3$, $b=2$. We calculate the $q_m$ in this format for a sequence of multipartitions. The first has a transit path at $\theta_3$, with $d_3=13$ and $17=C_1=C_2+d_3$. The fourth has a transit path at $5$, with $d_5 =23$ and $27=C_2= C_3+d_5$. We will continue to have $C_{k+1}=4$ until we open a new partition and get terminal oscillator $6$ or $3$.
\begin{itemize}
\item $ [(5,4,3,2,1)(5,4,3), \emptyset,\emptyset,\emptyset$]\\
$e: \frac{8}{23}, \frac{7}{18}, \frac{6}{13}, \frac{4}{8}, \frac{2}{3},1,$\\
$  (\frac{9}{28}+\frac{17}{28\cdot 23})\theta_5, \frac{17}{23\cdot 18}\theta_4,  \frac{17}{18\cdot 13}\theta_3, \frac{4}{13 \cdot 8}\theta_2, \frac{4}{8\cdot 3}\theta_1,\frac{1}{3}\theta_0$
\item $ [(6,5,4,3,2,1)(6,5,4,1), \emptyset,\emptyset,\emptyset]$\\
$e: \frac{9}{28}, \frac{8}{23}, \frac{8}{18}, \frac{6}{13}, \frac{4}{8}, \frac{2}{3},1,$\\
$  (\frac{10}{33}+\frac{17}{33\cdot 28})\theta_6, \frac{17}{28\cdot 23}\theta_5,(\frac{2}{23}+\frac{4}{23\cdot 18})\theta_4,  \frac{4}{18\cdot 13}\theta_3, \frac{4}{13 \cdot 8}\theta_2, \frac{4}{8\cdot 3}\theta_1,\frac{1}{3}\theta_0$
\item $ [(7,6,5,4,3,2,1)(7,6,5,2,1), \emptyset,\emptyset,\emptyset]$\\
$e: \frac{10}{33},\frac{9}{28}, \frac{10}{23}, \frac{8}{18}, \frac{6}{13}, \frac{4}{8}, \frac{2}{3},1,$\\
$  (\frac{11}{38}+\frac{17}{38\cdot 33})\theta_7, \frac{17}{33\cdot 28}\theta_6,   (\frac{3}{28}+\frac{4}{28\cdot 23})\theta_5\frac{4}{23\cdot 18}\theta_4,   \frac{4}{13 \cdot 8}\theta_3, \frac{4}{13 \cdot 8}\theta_2, \frac{4}{8\cdot 3}\theta_1,\frac{1}{3}\theta_0$
\item $ [(8,7,6,5,4,3,2,1)(8,7,6,3,2), \emptyset,\emptyset,\emptyset]$\\
$e:  \frac{11}{38},\frac{10}{33},\frac{11}{28}, \frac{10}{23}, \frac{8}{18}, \frac{6}{13}, \frac{4}{8}, \frac{2}{3},1,$\\
$  (\frac{12}{43}+\frac{17}{43\cdot 38})\theta_8,\frac{17}{38\cdot 33}\theta_7,(\frac{2}{33}+ \frac{27}{33\cdot 28})\theta_6,  \frac{27}{28\cdot 23}\theta_5,\frac{4}{23\cdot 18}\theta_4,   \frac{4}{13 \cdot 8}\theta_3, \frac{4}{13 \cdot 8}\theta_2\\
, \frac{4}{8\cdot 3}\theta_1,\frac{1}{3}\theta_0$
\item $\lambda= [(9,8,7,6,5,4,3,2,1)(9,8,7,4,3), (1),\emptyset,\emptyset]$\\
$e:\frac{12}{43},  \frac{11}{38},\frac{12}{33},\frac{11}{28}, \frac{10}{23}, \frac{8}{18}, \frac{6}{13}, \frac{4}{8}, \frac{3}{3}$\\
$(  \frac{13}{48}+\frac{17}{48\cdot 43})\theta_9,\frac{17}{43\cdot 38}\theta_8,(\frac{2}{38}+\frac{27}{38\cdot 33})\theta_7,\frac{27}{33\cdot 28}\theta_6,  (\frac{1}{28}+\frac{4}{28\cdot 23})\theta_5,\frac{4}{23\cdot 18}\theta_4,   \frac{4}{13 \cdot 8}\theta_3, \frac{4}{13 \cdot 8}\theta_2,\\
(\frac{1}{8}+\frac{1}{8}+\frac{6}{8\cdot 3})\theta_1$
\item $\lambda^*= [(10,9,8,7,6,5,4,3,2,1)(10,9,8,5,4,1), (2),\emptyset,\emptyset]$\\
$e: \frac{13}{48},\frac{12}{43},  \frac{13}{38},\frac{12}{33},\frac{12}{28}, \frac{10}{23}, \frac{8}{18}, \frac{6}{13}, \frac{5}{8}, \frac{3}{3}$\\
$ ( \frac{14}{53}+\frac{17}{53\cdot 48})\theta_{10}, \frac{17}{48\cdot 43}\theta_9,(\frac{2}{43}+\frac{27}{43\cdot 38})\theta_8,\frac{27}{38\cdot 33}\theta_7,(\frac{2}{33}+\frac{4}{33\cdot 28})\theta_6,  +\frac{4}{28\cdot 23}\theta_5,\frac{4}{23\cdot 18}\theta_4,   \frac{4}{13 \cdot 8}\theta_3, \\
( \frac{1}{13}+\frac{9}{13 \cdot 8})\theta_2,
 (\frac{1}{8}+\frac{6}{8\cdot 3})\theta_1$
\item $ \bar \lambda= [(10,9,8,7,6,5,4,3,2,1)(10,9,8,5,4,1), (2,1),\emptyset,\emptyset]$\\
$e: \frac{13}{48},\frac{12}{43},  \frac{13}{38},\frac{12}{33},\frac{12}{28}, \frac{10}{23}, \frac{8}{18}, \frac{6}{13}, \frac{6}{8}, \frac{3}{3}$\\
$ ( \frac{14}{53}+\frac{17}{53\cdot 48})\theta_{10}, \frac{17}{48\cdot 43}\theta_9,(\frac{2}{43}+\frac{27}{43\cdot 38})\theta_8,\frac{27}{38\cdot 33}\theta_7,(\frac{2}{33}+\frac{4}{33\cdot 28})\theta_6,  +\frac{4}{28\cdot 23}\theta_5,\frac{4}{23\cdot 18}\theta_4,   \frac{4}{13 \cdot 8}\theta_3, \\
( \frac{3}{13}+\frac{6}{13 \cdot 8})\theta_2,
 \frac{6}{8\cdot 3}\theta_1$
\item $ \tilde \lambda= [(10,9,8,7,6,5,4,3,2,1)(10,9,8,5,4,1), (2,1),(1),\emptyset]$\\
$e: \frac{13}{48},\frac{12}{43},  \frac{13}{38},\frac{12}{33},\frac{12}{28}, \frac{10}{23}, \frac{8}{18}, \frac{6}{13}, \frac{7}{8}, \frac{3}{3}$\\
$ ( \frac{14}{53}+\frac{17}{53\cdot 48})\theta_{10}, \frac{17}{48\cdot 43}\theta_9,(\frac{2}{43}+\frac{27}{43\cdot 38})\theta_8,\frac{27}{38\cdot 33}\theta_7,(\frac{2}{33}+\frac{4}{33\cdot 28})\theta_6,  +\frac{4}{28\cdot 23}\theta_5,\frac{4}{23\cdot 18}\theta_4,   \frac{4}{13 \cdot 8}\theta_3,\\ 
( \frac{5}{13}+\frac{3}{13 \cdot 8})\theta_2,
 \frac{3}{8\cdot 3}\theta_1$
\end{itemize}
Note the changes in the oscillators caused by transit paths, from $4$ to $17$, from $4$ to $27$ and, in $\lambda^*$, from $6$ to $9$.
\end{examp}

Our aim is to prove that  every non-increasing strongly residue-homogeneous multipartition $\lambda$ has a unidirectional  Littelmann path.  This Littelmann path will have a standard LS-representation corresponding to $\lambda$. The proof will proceed by induction. 
The assumption will imply that the Littlemann path consists of long paths separated by oscillating paths.   Given a strongly residue-homogeneous multipartition $\lambda$ whose addable nodes are all of residue $\epsilon$, we recall that we defined $\lambda^*$ to be the multipartition obtained by adding $\epsilon$-addable nodes down to the last non-zero row, and if the last row of $\lambda$ is odd, we defined $\bar \lambda$  to be the multipartition obtained by adding nodes of residue $\epsilon$ in place of all the $\epsilon$-addable nodes down to the end of the last non-empty partition, including a singleton at the end. By applying the Littelmann algorithm for constructing $f_\epsilon$, we will want to show that $\lambda^*$, and if possible  $\bar \lambda$, both have standard Littelmann paths, and that if we continue to fill up empty partitions with singletons of residue $\epsilon$, the new multipartition $\tilde \lambda$ also has standard Littelmann path.

\begin{thm} In rank $e=2$, for highest weight $a\Lambda_0+b\Lambda_1$, and $a>0$, any $2$-regular non-increasing strongly residue-homogeneous multipartition  $\lambda$    has a Littelmann path $\pi_\lambda$  with  a standard LS-representation for $\lambda$ which satisfies in addition that
\begin{enumerate}
\item If some partitions are empty, there is a primary  tail whose length in the $0$-direction is the number of $0$-corner $\emptyset$, or, if every $0$-corner partition is non-empty, there is a secondary  tail whose length in the $1$-direction is the number of $1$-corner $\emptyset$ in $\lambda$. 
\item The Littelmann path is unidirectional.
\end{enumerate}
\end{thm}

\begin{proof} We are trying to prove this theorem only in the case of $-$, since, as we will show in the last section, it is not generally true in the case of $+$, so we will simplify the notation by letting $d_m=d^-_m$ and $\theta_m=\theta^-_m$.

We will show by induction on the length of the top row $n$ that the Littelmann path is standard for its multipartition. Using Lemmas 4.3 and \ref{q}, this will give us a unidirectional Littelmann path, where we will take the $m_i$ to be the numbers $n_i'$ for $i \leq k$,
the integral length of the long path with index $i$ will be $I_{n_i'}$, the oscillators will be the $C_i$ from Lemma \ref{q}. If the last segment, which has index  $k$, ends in a singleton there will be no transit path and $m_{k+1}$ will be the index of the floor, which will be $0$ or $1$ if there are empty partitions, but may be larger if there are no empty partitions. If the last segment does not end in a singleton, there will be a transit path. 

 Relying on Lemma \ref{built}, we assume that we have a $\lambda$ with standard LS-representation which is unidirectional, and using the operations given there we will do either a widening to get $\lambda^*$, a deepening appending a singleton  to get $\bar \lambda$,  or a deepening which adds solitary partitions (that is, with a single part of length $1$) to get $\tilde \lambda$. Furthermore, our multipartitions can have several segments, so we must do an induction on the segments as well.  Our hypothesis on this inner induction is that all the segments previously treated consist of long paths with oscillators,  with coefficients as in a standard LS-representation.  

 For all segments $i$ with $i<k$, the multipartitions and the Littelmann path for all three operations will be identical, so to condense the notation, we will take the multipartition $\lambda$ and add the $ \epsilon$-addable nodes of that segment, of which there are  $I_{n_i'}$, to get the new segment in $\lambda^*$.  We then perform the operation $f_\epsilon$ to the corresponding long path of $\pi_{\lambda}$, and check that the various invariants  $e_m, m_i,$ and $C_i$ of the segment $i$  match up to the new invariants  $e_m^*, m_i^*$ and $C_i^*$. Then for $i=k$, we must take  $e_m, I_m,  m_k,$ and $C_k$ and use them to find the new  $e_m^*, I_m^*, m_k^*,$ and  $C_k^*$ in the case of widening, the new $\bar e_m, \bar I_m,\bar m_k,$ and $ \bar C_k$ in the case of deepening, as well as the new $\tilde e_m, \tilde I_m, \tilde m_k $ and $\tilde C_k'$ in the case of a deepening which adds extra solitary partitions.

In all cases, for $1 \leq i \leq k$, the addition of all the $\epsilon$- addable  nodes will add nodes to each row and  one node to each of the segment boundaries, so that we will have 
$(n_i')^*=n_i'+1$. In a segment which is entirely internal to a partition and does not end in a singleton,  the addable nodes will be added to existing rows, and we will have 
$I_{n_i'}=I_{(n_i')^*}^*$.  However, if the segment is not entirely internal to a partition, the number of addable nodes not added to previous rows is $v_i+z_i$ from the bottom of the segment to the top of the multipartition, and similarly from the bottom of segment $i-1$ to the top of the multipartion we get  $v_{i-1}+z_{i-1}$ addable nodes at the boundaries between partitions. Thus in fact we must add the segment boundary adjustment $s_i=(v_i+z_{i})-(v_{i-1}+z_{i-1})$ defined just before Corollary \ref{segnot}.
\[
I_{(n_i')^*}^*=I_{n_i'}+s_i
\]
We divide into  four cases: for all $i$, continuing to widen or deepen the segment $i$, and for $i=k$, to widen after deepening, to deepen after widening and finally to adding  new solitary partitions.

\begin{itemize}

\item $1 \leq  i  \leq k$, where if the segment ends in a singleton, we deepen and if does not end in a singleton, we widen.   For $i<k$, there is no difference between widening and deepening, and what we do is add all the addable nodes in  each segment.  Thus it will suffice to calculate  the new  $e_m^*, m_i^*, I_{m_i}^*$ and  $C_i^*$, with some special treatment for the case $i=k$.  Each segment  has its own long path, of which the length is the number of addable rows in the segment. If we are at $m=n_i'$, then we reflect the entire long path.  As in Lemma \ref{q}, this produces a new long path in the new direction, that of $\theta_{m+1}$. By the induction hypothesis, $\lambda$ has a standard LS representation, which depends on the sequence $e_m=\frac{c_m}{d_m}$, $1 \leq m \leq n $. In order to prove that the widened multipartition $\lambda^*$ has a standard LS-representation, we have to show that the new Littelmann path we get by the various reflections and translations given by Littelmann has an LS-representation given by a sequence  $e_m^*=\frac{c_m^*}{d_m}$, for $1 \leq m \leq n+ 1$. Now in fact, $c_m$ is only changed where nodes are added in the $m$-ladder.  For each segment $i$, the nodes are added in one particular ladder, that corresponding to $n_i'+1$. Thus for all remaining $m$ in segment $i$, we have $c_m=c_m^*$, which means that for all $q_m$ which do not involve $n_i'+1$, $q_m^*=q_m$. Furthermore, since we already showed that $I_{n_i'}$ is the number of addable nodes in segment $i$, we have, for $m=n_i'$, that  
\[
 c_{m+1}^*=c_{m+1}+I_{m}
\]

We now calculate the new $q_m^*$ and  $q_{m+1}^*$.
\begin{align*}
 q^*_m=&e_m^*-e_{m+1}^*\\=&\frac{c_m^*}{d_m}-\frac{c_{m+1}^*}{d_{m+1}}\\
=&\frac{c_m}{d_m}-\frac{c_{m+1}+I_m}{d_{m+1}}\\
=&q_m-\frac{I_m}{d_{m+1}}\\
=&\left (\frac{I_m}{d_{m+1}}+\frac{C_i}{d_md_{m+1}}\right )-\frac{I_m}{d_{m+1}}\\
=&\frac{C_i}{d_md_{m+1}}
\end{align*}
This is the same oscillator that occurs in $q_{m-1}$, so we still have an oscillating part,  the new oscillator $C_i^*$ is identical to $C_i$ and $I_m^*$ becomes $0$ as we expect in the oscillating part.

We now calculate the coefficicient of $\theta_{m+1}$ in the new Littelmann path obtained by reflecting the long path.  The  part which is reflected is the subpath $\frac{I_m}{d_{m+1}}\theta_m$, so after $I_m$ reflections it becomes $\frac{I_m}{d_{m+1}}\theta_{m+1}$.  It joins the small oscillating part from the previous segment,  $\frac{C_{i-1}}{d_{m+1}d_{m+2}}\theta_{m+1}$ so that the coefficient of $\theta_{m+1}$ will be $\frac{I_m}{d_{m+1}}+\frac{C_{i-1}}{d_{m+1}d_{m+2}}$.

\begin{align*}
 q^*_{m+1}=&e_{m+1}^*-e_{m+2}^*\\=&\frac{c_{m+1}^*}{d_{m+1}}-\frac{c_{m+2}^*}{d_{m+2}}\\
=&\frac{c_{m+1}+I_m}{d_{m+1}}-\frac{c_{m+2}}{d_{m+2}}\\
=&\frac{I_m}{d_{m+1}}+q_{m+1}\\
=&\frac{I_m}{d_{m+1}}+\frac{C_{i-1}}{d_{m+1}d_{m+2}}
\end{align*}

 The number of addable nodes in the new segment should be the number of addable nodes in the segment $i$ of $\lambda$ plus the number of $s_i$ of partition boundaries in the  segment, which is $s_i=v_i-v_{i-1}+z_i-z_{i-1}$. so that 
\[
I_{m+1}^*=I_m+s_i
\]
We now rewrite $q_{m+1}^*$ in terms of the invariants of $\lambda^*$, where at the end we apply Corollary \ref{dif} and then the result we just proved that $C_i=C_i^*$.

\begin{align*}
 q^*_{m+1}=&\frac{I_m}{d_{m+1}}+\frac{C_{i-1}}{d_{m+1}d_{m+2}}\\
=&\frac{I_m}{d_{m+2}}+\frac{C_{i-1}+(d_{m+2}-d_{m+1})I_m}{d_{m+1}d_{m+2}}\\
=&\frac{I_m+s_i}{d_{m+2}}+\frac{C_{i-1}+(d_{m+2}-d_{m+1})I_m-d_{m+1}s_i}{d_{m+1}d_{m+2}}\\
=&\frac{I_{m+1}^*}{d_{m+2}}+\frac{C_{i-1}+rI_m-d_{m+1}s_i}{d_{m+1}d_{m+2}}\\
=&\frac{I_{m+1}^*}{d_{m+2}}+\frac{C_i}{d_{m+1}d_{m+2}}\\
=&\frac{I_{m+1}^*}{d_{m+2}}+\frac{C_i^*}{d_{m+1}d_{m+2}}\\
\end{align*}

 The little left-over piece at the end after we reflected most of the long path of segment $i$ will be $\frac{C_i}{d_md_{m+1}}\theta_m$.  Whereas the oscillating paths used to pair in a way which always returned them to the same integer value of the positive coordinate, they will now pair in a way which always returns them to the same integer value of the other coordinate.  If $i<k$, then the last piece will now join the next long path. For the case of deepening a segment  with $i=k$ ending in a singleton, the entire calculation is valid if we just replace  $  q_m^*$ with $ \bar q_m$ and similarly for $m+1$.

Note that if $i=1$, there was no previous small path, but in that case we also have $C_{i-1}=0$.
In summary, the long path of segment $i$ has shifted up one index from $m$ to $m+1$ as we passed from $\lambda$ to $\lambda^*$.

\item $i=k$, where the segment ends in a singleton and we want to widen.  The case of $i=k$ is considerably more complicated than the case of $i<k$ because our induction must take into account five separate sections of the Littelmann path, which we now list in order going away from the origin:  The long path of segment $k$, the oscillating part of segment $k$, the possibly empty transit path, the possibly empty terminal oscillator and the possibly empty tail.  The transit path can only be used as a seed to start a new segment when the last row is odd.

 We are assuming that $\lambda$  has a standard LS-representation and is unidirectional.
We now take as our inner induction hypothesis that $i=k$, and  that segment $k$ ends in a singleton. We wish  to do a widening and to compute the new oscillator.  Let $m=n_k'$.
We already showed in the previous section of this proof that if we were to do a deepening to get $\bar \lambda$, adding all $I_m$ addable nodes, we would get a new long path  with standard LS-representation, where
\[
\bar q_{m}=\frac{C_k}{d_md_{m+1}}
\]
\[
\bar q_{m+1}=\frac{I_{m}}{d_{m+1}}+\frac{C_k}{d_{m+1}d_{m+2}}
\] 
  Instead of adding all the $I_n$ addable nodes, we add only $I_n-z_k$  addable nodes.  In terms of the Littelmann path, this leaves a path of length  $1$ in the positive direction which will be the transit path and a continuation with the same terminal oscillator $C_k$ as before.
\[ \left ( \frac{z_k}{d_{m+1}}+\frac{C_k}{d_md_{m+1}} \right )\theta_m= \left ( \frac{C_k+z_kd_m}{d_md_{m+1}} \right )\theta_m
\]
Now we check that this widening will still give a standard LS-representation with  $C_k^*= {C_k+z_kd_m}$ as oscillator of segment $k$.  We have   $c_{m+1}^*=c_{m+1}+I_m-z_k$ and $c_\ell=c_\ell^*$ for  all other $c_\ell$ in the segment, so we need check only $q_\ell$ for $\ell=m, m+1$.

\begin{align*}
 q^*_m=&e_m^*-e_{m+1}^*\\=&\frac{c_m^*}{d_m}-\frac{c_{m+1}^*}{d_{m+1}}\\
=&\frac{c_m}{d_m}-\frac{c_{m+1}+I_m-z_k}{d_{m+1}}\\
=&q_m-\frac{I_m-z_k}{d_{m+1}}\\
=&\left (\frac{I_m}{d_{m+1}}+\frac{C_k}{d_md_{m+1}}\right )-\frac{I_m-z_k}{d_{m+1}}\\
=&\frac{C_k+z_kd_m}{d_md_{m+1}}
\end{align*}

This is precisely the formula for the oscillator $C_k^*$ in Lemma \ref{q}, where now $z_i^*=0$, and the path after widening is still unidirectional.  Thus we have a short path which is ambiguous:  it can either be thought of as the length $1$ transit path with a continuation which belongs to the terminal ocillator, or as the first step in a new oscillating part. With $m=n_k'$,
\begin{align*}
 q^*_{m+1}=&e_{m+1}^*-e_{m+2}^*\\=&\frac{c_{m+1}^*}{d_{m+1}}-\frac{c_{m+2}^*}{d_{m+2}}\\
=&\frac{c_{m+1}+I_m-z_k}{d_{m+1}}-\frac{c_{m+2}}{d_{m+2}}\\
=&\frac{I_m-z_k}{d_{m+1}}+q_{m+1}\\
=&\frac{I_m-z_k}{d_{m+1}}+\frac{C_{k-1}}{d_{m+1}d_{m+2}}\\
=&\frac{I_m-z_k}{d_{m+2}}+\frac{(d_{m+2}-d_{m+1})(I_m-z_k)+C_{k-1}}{d_{m+1}d_{m+2}}\\
=&\frac{I_m-z_k+s_k^*}{d_{m+2}}+\frac{r(I_m-z_k)-d_{m+1}s_k^*+C_{k-1}}{d_{m+1}d_{m+2}}\\
=&\frac{I_{m+1}^*}{d_{m+2}}+\frac{C_k^*}{d_{m+1}d_{m+2}}\\
\end{align*}
\noindent where the last step is a result of Lemma \ref{dif} applied to $\lambda^*$,
 \[
C_k^*=rI_{{n_k'}^*}^*-d_{{n_k'}^*+1}{s_k}^*+C_{k-1}^*,
\]
\noindent  using the fact that $C_{k-1}=C_{k-1}^*$  since $k-1<k$, that $ I_{m+1}^*=I_m+s_k-2z_k$, that $s_k^*=s_k-z_k$   and that $d_{m+2} = d_{m+1}+r$.
Further widening, as we saw in the first part of the proof, will move the long paths up by one, and will add new short paths with the same oscillator.  The position of the transit path will be unaffected, so we will have $m_{k+1}=m_{k+1}^*$and the section from the transit path to the tail will only be translated, so it will memain unchanged.

\item  $i=k$.  Deepening after widening without adding a new partition.   The partition ended in a singleton when we first started to widen at $m=m_{k+1}$ and the transit path has a hub which is a multiple of $\theta_{m_{k+1}}$. 

Thus, after widening to get $\lambda^*$, if the last row is odd  we can continue one more step and reflect the part of length $1$ in the transit path to get $\bar \lambda$.  This becomes the new long path of segment $k+1$. There will not be a transit path in the new seed part until after the next widening. The tail and the oscillating part before it remain as they were, so the new path is unidirectional.  It remains to show that the new path has a standard LS-representation.  The ladders $\bar c_m$ are identical to those of $c_m^*$, until we come to $m=n_{k+1}'$. The last row being odd, this is not the beginning of a segment for $\ell=m,m+1$, so we have $c_\ell^*=c_\ell$.  In $\bar \lambda$, we add a new node in the  $m+1$-ladder, so that   $\bar c_{m+1}= c_{m+1}+1$.  We have already checked the LS-representation for ladders outside of $m$ for $\lambda^*$, so now we check $\bar q_m$, where $I_m$, the number of addable nodes in the new segment $k+1$,   is $2$.

\begin{align*}
 \bar q_m=&\frac{c_m}{d_m}-\frac{c_{m+1}+1}{d_{m+1}}\\
=&q_m-\frac{1}{d_{m+1}}\\
=&\left (\frac{1}{d_{m+1}}+\frac{C_{k+1}}{d_md_{m+1}}\right )-\frac{1}{d_{m+1}}\\
=&\left (\frac{C_{k+1}}{d_md_{m+1}}\right )\\
\end{align*}
\begin{align*}
 \bar q_{m+1}=&\frac{c_{m+1}+1}{d_{m+1}}-\frac{c_{m+2}}{d_{m+2}}\\
=&q_{m+1}+\frac{1}{d_{m+1}}\\
=&\left (\frac{1}{d_{m+1}}+\frac{C_{k}}{d_{m+1}d_{m+2}}\right )\\
=&\left (\frac{d_{m+2}}{d_{m+1}d_{m+2}}+\frac{d_m+C_{k+1}}{d_{m+1}d_{m+2}}\right )\\
=&\left (\frac{2}{d_{m+2}}+\frac{C_{k+1}}{d_{m+1}d_{m+2}}\right )
\end{align*}

\noindent  If the last row is even, we must either widen again or proceed to the next possibility, adding a new partition.

\item $i=k$.  Deepening to add new partitions. The new partitions must come from the tail.  In order to get to the tail, we must add all addable nodes between the last long path and the tail. There will be exactly one such addable node if there is a transit path, and none otherwise. If there is no transit path, then the solitary partitions will open a new segment and the addable node will belong to segment $k$.  If there is a transit path, and the addable node belongs to the same ladder as the solitary partitions, then the addable node will belong to the same segment as the new solitary partition. Under all other conditions, we will have to add two segments.  In order to treat all cases at once, if there is a transit path, we will call the segment with the solitary partitions $k+2$, and, when we finish deepening, rename it as $k+1$ if necessary.

It is only possible to add $y$ solitary partitions is there are empty partitions, which is to say, if there is a tail,  which will be  as always at $\theta_m$ for 
$m= 1_{v_{i-1} \geq a}$.  Only $c_{m+1}$ will be changed by the addition of these addable nodes. We have already shown that $\bar \lambda$ has a standard LS-representation and is unidirectional,  so as in the earlier parts of the proof, it only necessary to check $q_m$ and $q_{m+1}$, and see that they correspond to the Littelmann path obtained by reflecting a piece of the tail of length $y$ in the positive coordinate. 
It is also true that $e_m=1$.  This is by definition if $m=0$, and we can only have $m=1$ if $c_1=d_1=a$. We have $\tilde c_{m+1}=c_{m+1}+y$, and for the remaining relevant indices $u$ we have $\tilde c_u=c_u$.

\begin{align*}
 \tilde q_m=&\tilde e_m-\tilde e_{m+1}\\=&\frac{\tilde c_m}{d_m}-\frac{\tilde c_{m+1}}{d_{m+1}}\\
=&1-\frac{c_{m+1}+y}{d_{m+1}}\\
=&\frac{d_{m+1}-c_{m+1}-y}{d_{m+1}}
\end{align*}

\noindent which gives the new tail, possibly zero, and coincides with the tail calculated in Lemma \ref {dif}.  Now, in order to find the new terminal oscillator, we calculate

\begin{align*}
\tilde  q_{m+1}=&\frac{c_{m+1}+y}{d_{m+1}}-\frac{c_{m+2}}{d_{m+1}d_{m+2}}\\
=&q_{m+1}+\frac{y}{d_{m+1}}\\
=&q_{m+1}+\frac{2y}{d_{m+2}}+\frac{y(d_{m+2}-2d_{m+1})}{d_{m+1}d_{m+2}}\\
=&q_{m+1}+\frac{2y}{d_{m+2}}+\frac{y(-d_{m})}{d_{m+1}d_{m+2}}\\
\end{align*}

\noindent where the added piece  $\frac{2y}{d_{m+2}}$ gives two addable nodes for each of the added solitary partitions. If $m=1$,  since $d_1=a$, 
then the quantity $\frac{y(-a)}{d_{m+1}d_{m+2}}$ is precisely the difference $C_{k+2}-C_{k+1}$ from Lemma \ref{dif}. In the case of $m=0$, 
then $d_0=-b$ and again the quantity $\frac{y(b)}{d_{m+1}d_{m+2}}$ is the difference $C_{k+2}-C_{k+1}$ from Lemma \ref{dif}.

\end{itemize}

\end{proof}

To illustrate the theorem, we give  case where the partition crosses the boundary between the $0$-corner and $1$-corner partitions.
\begin{examp} Consider $e=2$, $a=2$, $b=3$. We calculate the $q_m$ in this format for a sequence of multipartitions. The second has a transit path at $\theta_2$, with $d_2=7$ and $13=C_1=C_2+d_2$. The first four multipartitions have tails; the last does not, but it does have three long paths, and terminal oscillator $0$.
\begin{itemize}
\item $ [(2,1),(2,1),\emptyset,\emptyset,\emptyset$]\\
$e:  \frac{4}{7}, \frac{2}{2},$\\
$   (\frac{6}{12}+ \frac{6}{12 \cdot 7})\theta_2, \frac{6}{7\cdot 2}\theta_1$
\item $ [(3,2,1),(3,2),\emptyset,\emptyset,\emptyset$]\\
$e:  \frac{5}{12}, \frac{4}{7}, \frac{2}{2},$\\
$   (\frac{6}{17}+ \frac{13}{17\cdot 12})\theta_3, \frac{13}{12 \cdot 7}\theta_2, \frac{6}{7\cdot 2}\theta_1$
\item $ [(4,3,2,1),(4,3),(1),(1),\emptyset$]\\
$e:  \frac{6}{17}, \frac{5}{12}, \frac{6}{7}, \frac{2}{2},$\\
$  (\frac{7}{22}+ \frac{13}{22\cdot 17})\theta_4, \frac{13}{17\cdot 12}\theta_3, (\frac{5}{12}+ \frac{2}{12 \cdot 7})\theta_2, \frac{2}{7\cdot 2}\theta_1$
\item $ [(5,4,3,2,1)(5,4,1), (2,1),(2,1),\emptyset$]\\
$e: \frac{7}{22}, \frac{6}{17}, \frac{10}{12}, \frac{6}{7}, \frac{2}{2},$\\
$  (\frac{8}{27}+\frac{13}{27\cdot 22})\theta_5, \frac{13}{22\cdot 17}\theta_4, (\frac{8}{17}+ \frac{2}{17\cdot 12})\theta_3, \frac{2}{12 \cdot 7}\theta_2, \frac{2}{7\cdot 2}\theta_1$
\item $ [(6,5,4,3,2,1)(6,5,2,1), (3,2,1),(3,2,1),(1)]$\\
$e: \frac{8}{27}, \frac{7}{22}, \frac{14}{17}, \frac{10}{12}, \frac{7}{7},$\\
$  (\frac{9}{32}+\frac{13}{32\cdot 27})\theta_6, \frac{13}{27\cdot 22}\theta_5,( \frac{11}{ 22}+ \frac{2}{22\cdot 17})\theta_4,  \frac{2}{17\cdot 12}\theta_3, (\frac{2}{12}+\frac{0}{12 \cdot 7})\theta_2 $.
\end{itemize}

\end{examp}

\section{Motivation and Open Problems}

To motivate the definition of the residue-homogeneous multipartitions, we use the block reduced crystal graph from \cite{AS}, further  developed in \cite{BFS}, in which the vertices are the weights 
$\Lambda-\alpha$ in $P(\Lambda)$, where two weights connected by an edge if we can find a basis element from the weight space of each such that they are connected by $f_i$ for some $i$. In Figure 1, we have drawn the case 
of $\Lambda=2\Lambda_0+\Lambda_1$, truncated at degree $17$. Vertices on the same horizontal line correspond to multipartitions of fixed degree $n$. The diagonal edges down to the left, which are called $0$-strings,  represent action by $f_0$ and the edges going down to the right, called $1$-strings,  represent $f_1$.  The defects can all be determined in this case from the fact that acting on a string  by a simple reflection preserves defect, the highest weight element has defect $0$, adding the null root $\delta$ adds the level $r=3$ to the defect, and $\defect(\Lambda-\alpha_0)=1$.  The number of multipartitions corresponding to a vertex is fixed for each defect.  In this case there is one multipartition for each vertex of defect $0$ or $1$, there are no vertices of defect $2$, and there are $2$ multipartitions for a vertex of defect $3$.  We have written in the multipartitions for the external vertices of defect $0,1$ and $3$. 
We originally began to study the unidirectional Littelmann path in the context of external vertices.

It follows from the work of Scopes \cite{Sc} and generalizations by Ariki and Koike \cite{AK}, Kleshchev and Brundan \cite{Kl}, that if we have a weight all of whose crystal elements are external, then acting on the block of the cyclotomic Hecke algebra  by the Weyl group in the direction increasing the degree will produce a Morita equivalence. For every defect, there is a degree after which all weights of that defect are external, so the external vertices are a sort of limit case.  The set of strongly residue homogeneous multipartitions contained almost all of the external vertices, as well as a number of other vertices which are definitely not external. We defined  strongly residue homogeneous multipartitions here only for the case $e=2$, though it is possible to make an extension to larger ranks. We also consider unidirectional Littelmann paths only for defect $0$ weights for which the reflection multiplying $\Lambda$ is $s_0$, the reflection through the $0$-th simple root $\alpha_0$. Since we are in the case $e=2$, the dominant integral weight $\Lambda$ is of the form $\Lambda=a\Lambda_0+b\Lambda_1$ and we set $r=a+b$, which in type A is the level.  
\noindent \begin{figure*}[h]
\centering
\includegraphics[scale=0.7]{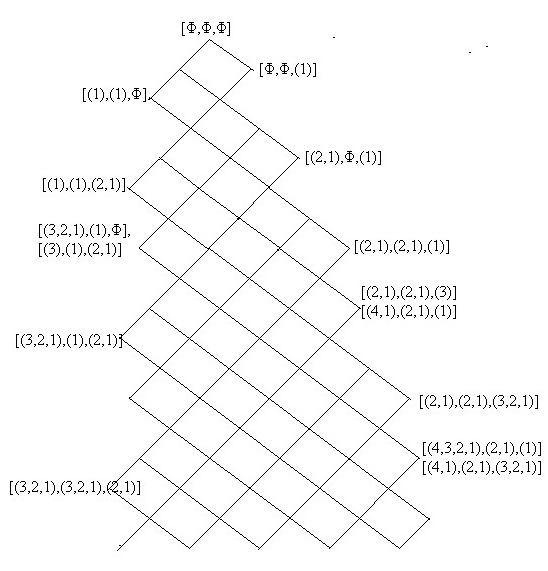}
\captionsetup{labelformat=empty}
\caption{Figure 1: $e=2,\Lambda=2\Lambda_0+\Lambda_1$}
\end{figure*}

\begin{lem}
A vertex of the crystal  for which all multipartitions are residue-homogeneous multipartitions is external.
\end{lem}

\begin{proof}
Let $\lambda$ be a residue-homogeneous multipartition.  Since each partition is alternating, the rows in a partition all end with the same residue, and by the conditions on the parity of the first rows, this residue is the same for each partition.  Let $1-\epsilon$ be the common residue at the end of all non-zero rows, so that the addable nodes at the ends are all of residue $\epsilon$.   Since there are no $\epsilon$ removable rows, we must have that $e_\epsilon$ is zero.
\end{proof}

The multipartitions labelling external vertices in Figure $1$ are all strongly residue-homogeneous. Among them, the multipartition $[(3),(1),(2,1)]$ is  strongly residue-homogeneous  because a singleton not at the boundary between $0$- and $1$-corner partitions makes no restriction on the previous partition. 

The main theorem is severely limited, in that it only applies to the case $e=2$ and even there, only to the non-increasing partitions.  We can in  fact get some results for $e=3$ and perhaps higher, but we must restrict ourselves to sequences of defect $0$ weights which are periodic, that is to say, generated by words which are periodic n the Weyl generators.  The case of $3$-periodic words is addressed in the original Ph.D. thesis, \cite{O}. We again get long paths and short oscillating sections between them.  

To move in the other direction, from a unidirectional Littelmann path to its multipartition is harder, since the   $I_m$ and the $C_i$ are nonlinear functions of the multipartition parameters $v_i,  t_i,$ and $ z_i$. However, the nonlinear function, as we showed in Lemma \ref{dif}, is a two part piecewise-linear function.  Furthermore, the same lemma allows us to determine the $s_i$, since, in the definition of unidirectional, we specify the $C_i$ and 
$I_{n_i'}$. In our two main examples, we used  two distinct primes as $a$ and $b$, which made it easy to identify the terminal oscillators, as small multiples of $a$ or $b$, while the widening oscillators were much larger.

In order to explain what interferes with getting a standard LS-representation in the ``+'' case, where the first $1$-corner partition has first row larger than the number of rows in the previous partition, as in $[(1),(1),(2,1)]$, we look an easy case

\begin{defn}
 A \textit{pseudo-floor} of $\lambda$ is a multipartition which begins like a defect $0$ multipartition with weight  $\psi^\pm _\ell$, but is  truncated  after $w$ partitions. 
\end{defn}

 If it is increasing at the bundary between $0$-corner and $1$-corner partition, it will not have a standard LS-representation.
Suppose we have a pseudo-floor $\lambda$ and do a deepening to $\bar \lambda$ We need to show that $\bar e_n \geq  \bar e_{n+1}$.  This is equivalent to showing the $q_{n} \geq 0$ . Since $n<n_1' =n+1$, we know from Lemma \ref{ri} for $\bar \lambda$ that $I_n=0$.  Thus we need to determine if $C_1\geq 0$.  We have $i=1=k$ since there is only one segment and $z_i=1$ because we have a pseudo-floor, so we are in case the second case in Lemma \ref{q}.
\[
C_1= \mp(v_1+1)b\pm r((v_1-a)_++1_{v_1 \geq a})
\]
\noindent In the $+$ case, $a<w<r$.  We then have $(v_1-a) \geq 0$, so we can drop the $+$.  Substituting
$v_1+1=w$ and using $b=(r-w)+(w-a)$, we have 
\[
C_1= -wb+ r(w-a)=(r-w)(w-a-w)=-(r-w)a<0.
\]

Thus only the ``-'' case will give us an LS-representation.  Since the formulae for a pseudo-floor are simple, we compute both the oscillator and the tail:

\[
C_1=
\begin{cases}
wb,&w\leq a,\\
(r-w)a,& r>w>a,
\end{cases}
\]

Also for a pseudo-floor, we can calculate the hub of the tail

\[
hub(T)=
\begin{cases}
\frac{a-w}{a}\theta_0,&w< a,\\
\frac{r-w}{b}\theta_1,& r>w \geq a
\end{cases}
\]

\begin{examp} Let $\Lambda=2\Lambda_0+\Lambda_1$ and let $\lambda$ be the multipartition $[(5,2,1),(1),\emptyset]$.

\begin{itemize}
\item $\lambda= [(5,2,1),(1),\emptyset$]\\
$e: \frac{1}{14},\frac{1}{11},\frac{3}{8},\frac{2}{5},\frac{2}{2}$\\
$   (\frac{1}{17}+ \frac{3}{17\cdot 14})\theta_5, \frac{3}{14 \cdot 11}\theta_4, (\frac{3}{11}+ \frac{1}{11\cdot 8})\theta_3, \frac{1}{8 \cdot 5}\theta_2, \frac{4+2}{5\cdot 2}\theta_1$
\item  $\lambda^*=[(6,3,2,1),(2),\emptyset]$\\
$e: \frac{1}{17},\frac{1}{14},\frac{4}{11},\frac{3}{8},\frac{3}{5},\frac{2}{2})\\$
$  (\frac{1}{20}+ \frac{3}{20\cdot 17})\theta_6, \frac{3}{17\cdot 14}\theta_5 ,(\frac{4}{14}+ \frac{1}{14 \cdot 11})\theta_4, \frac{1}{11\cdot 8}\theta_3, (\frac{1}{8}+ \frac{4}{8 \cdot 5})\theta_2, \frac{4}{5\cdot 2}\theta_1$
\item  $\bar \lambda=[(6,3,2,1),(2,1),\emptyset]$\\
$e: \frac{1}{17},\frac{1}{14},\frac{4}{11},\frac{3}{8},\frac{4}{5},\frac{2}{2},$\\
$  (\frac{1}{20}+ \frac{3}{20\cdot 17})\theta_6, \frac{3}{17\cdot 14}\theta_5 ,(\frac{4}{14}+ \frac{1}{14 \cdot 11})\theta_4, \frac{1}{11\cdot 8}\theta_3, (\frac{3}{8}+ \frac{2}{8 \cdot 5})\theta_2, \frac{2}{5\cdot 2}\theta_1$
\item $\tilde \lambda=[(6,3,2,1),(2,1),(1)]$\\
$e: \frac{1}{17},\frac{1}{14},\frac{4}{11},\frac{3}{8},\frac{5}{5},$\\
$  (\frac{1}{20}+ \frac{3}{20\cdot 17})\theta_6, \frac{3}{17\cdot 14}\theta_5 ,(\frac{4}{14}+ \frac{1}{14 \cdot 11})\theta_4, \frac{1}{11\cdot 8}\theta_3, (\frac{5}{8}+ \frac{0}{8 \cdot 5})\theta_2.$
\end{itemize}

In Figure $2$, we draw the hubs of the Littelmann paths in the case of deepening.

\noindent \begin{figure*}[h]\label{unidirectional}
\centering
\includegraphics[scale=0.5]{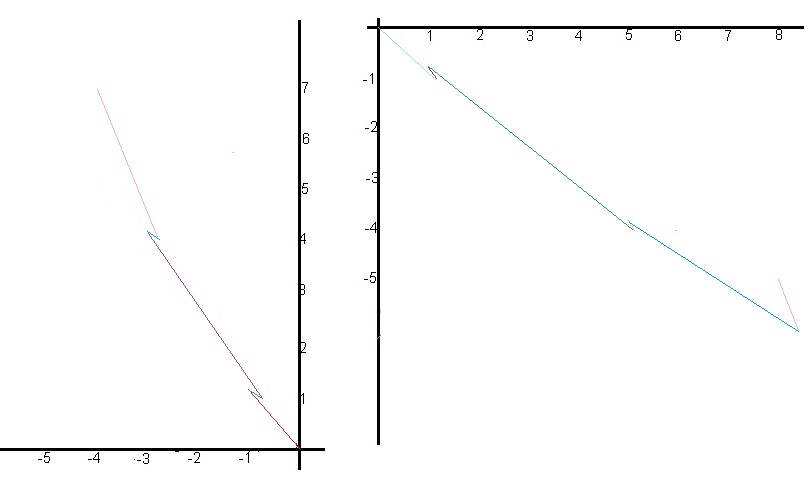}
\captionsetup{labelformat=empty}
\caption{Figure 2: $[(5,2,1),(1),\emptyset]$; $[(6,3,2,1),(2,1),\emptyset]$}
\end{figure*}

\noindent \smallskip{}

There are three long paths, but in the last path there is a piece of $1$-length $b=1$, which is actually the tail, and another piece of  $1$-length $2$ which will be needed if we wish to add two nodes to the partitions $(1)$.  If we choose to widen, then we get $\lambda^*=[(6,3,2,1),(2),\emptyset]$ in which only the first third of the last long path is reflected. If we choose to continue deepening to $\bar \lambda$, without opening another partition, then we will reflect the second third get a multipartition  $\tilde \lambda =[(6,3,2,1),(2,1),\emptyset]$ in which the tail is obvious.
Finally, if we continue to fill in the last non-zero partition, the tail will straighten out in the direction of the last long path, giving the Littelmann path of 
$\tilde \lambda$.
\end{examp}

\end{document}